\def\0{{\bf 0}}
\def\R{{\mathbb R}}
\def\Z{{\mathbb Z}}
\theoremstyle{plain}
\newtheorem{thm}{Theorem}
\newtheorem{prop}[thm]{Proposition}
\newtheorem{lem}[thm]{Lemma}
\newtheorem{theorem}{Theorem}
\newtheorem{lemma}[thm]{Lemma}
\theoremstyle{definition}
\newtheorem*{definition}{Definition}
\newtheorem*{theorem*}{Theorem}
\theoremstyle{remark}
\newtheorem{remark}[equation]{Remark}
\begin{document}

\raggedbottom

\numberwithin{equation}{section}

%
%
\newcommand{\MarginNote}[1]{
    \marginpar{
        \begin{flushleft}
            \footnotesize #1
        \end{flushleft}
        }
    }
%
%
\newcommand{\NoteToSelf}[1]{
    }

%
%
\newcommand{\Obsolete}[1]{
    }

\newcommand{\Detail}[1]{
    \MarginNote{Detail}
    \skipline
    \hspace{+0.25in}\fbox{\parbox{4.25in}{\small #1}}
    \skipline
    }

\newcommand{\Todo}[1]{
    \skipline \noindent \textbf{TODO}:
    #1
    \skipline
    }

\newcommand{\Comment}[1] {
    \skipline
    \hspace{+0.25in}\fbox{\parbox{4.25in}{\small \textbf{Comment}: #1}}
    \skipline
    }

%
%

\newcommand{\IntTR}
    {\int_{t_0}^{t_1} \int_{\R^d}}

\newcommand{\IntAll}
    {\int_{-\iny}^\iny}

\newcommand{\Schwartz}
    {\ensuremath \Cal{S}}

\newcommand{\SchwartzR}
    {\ensuremath \Schwartz (\R)}

\newcommand{\SchwartzRd}
    {\ensuremath \Schwartz (\R^d)}

\newcommand{\SchwartzDual}
    {\ensuremath \Cal{S}'}

\newcommand{\SchwartzRDual}
    {\ensuremath \Schwartz' (\R)}

\newcommand{\SchwartzRdDual}
    {\ensuremath \Schwartz' (\R^d)}

\newcommand{\HSNorm}[1]
    {\norm{#1}_{H^s(\R^2)}}

\newcommand{\HSNormA}[2]
    {\norm{#1}_{H^{#2}(\R^2)}}

\newcommand{\Holder}
    {H\"{o}lder }

\newcommand{\Holders}
    {H\"{o}lder's }

\newcommand{\Holderian}
    {H\"{o}lderian }

\newcommand{\HolderRNorm}[1]
    {\widetilde{\Vert}{#1}\Vert_r}

\newcommand{\LInfNorm}[1]
    {\norm{#1}_{L^\iny(\Omega)}}

\newcommand{\SmallLInfNorm}[1]
    {\smallnorm{#1}_{L^\iny}}

\newcommand{\LOneNorm}[1]
    {\norm{#1}_{L^1}}

\newcommand{\SmallLOneNorm}[1]
    {\smallnorm{#1}_{L^1}}

\newcommand{\LTwoNorm}[1]
    {\norm{#1}_{L^2(\Omega)}}

\newcommand{\SmallLTwoNorm}[1]
    {\smallnorm{#1}_{L^2}}

\newcommand{\LpNorm}[2]
    {\norm{#1}_{L^{#2}}}

\newcommand{\SmallLpNorm}[2]
    {\smallnorm{#1}_{L^{#2}}}

\newcommand{\lOneNorm}[1]
    {\norm{#1}_{l^1}}

\newcommand{\lTwoNorm}[1]
    {\norm{#1}_{l^2}}

\newcommand{\MsrNorm}[1]
    {\norm{#1}_{\Cal{M}}}

\newcommand{\FTF}
    {\Cal{F}}

\newcommand{\FTR}
    {\Cal{F}^{-1}}

\newcommand{\InvLaplacian}
    {\ensuremath{\widetilde{\Delta}^{-1}}}

\newcommand{\EqDef}
    {\hspace{0.2em}={\hspace{-1.2em}\raisebox{1.2ex}{\scriptsize def}}\hspace{0.2em}}

%
%

%
%

\title
    [Vanishing viscosity for nondecaying initial data]
    {Vanishing viscosity in the plane for nondecaying velocity and vorticity}

\author{Elaine Cozzi}
\address{Department of Mathematical Sciences, Carnegie Mellon University}
\curraddr{}
\email{ecozzi@andrew.cmu.edu}

\subjclass{Primary 76D05, 76B99} 
\date{} 


\keywords{Fluid mechanics, Inviscid limit}

\begin{abstract}
Assuming that initial velocity and initial vorticity are bounded in the plane, we show that on a sufficiently short time interval the unique solutions of the Navier-Stokes equations converge uniformly to the unique solution of the Euler equations as viscosity approaches zero.  We also establish a rate of convergence.
\end{abstract}

\maketitle

\section{Introduction}
\noindent We consider the Navier-Stokes equations modeling incompressible viscous fluid flow, given by
\begin{align*}
    \begin{matrix}
        (NS) & \left\{
            \begin{matrix}
                \partial_t v_{\nu} + v_{\nu} \cdot \nabla v_{\nu} - \nu \Delta
                 v_{\nu} = - \nabla p_{\nu} \\
                \text{div } v_{\nu} = 0 \\
                v_{\nu}|_{t = 0} = v_{\nu}^0,
            \end{matrix}
            \right.
    \end{matrix}
\end{align*}
and the Euler equations modeling incompressible non-viscous fluid flow, given by
\begin{align*}
    \begin{matrix}
        (E) & \left\{
            \begin{matrix}
                \partial_t v + v \cdot \nabla v = - \nabla p\\
                \text{div } v = 0 \\
                v|_{t = 0} = v^0.
            \end{matrix}
            \right.
    \end{matrix}
\end{align*}
In this paper, we study the vanishing viscosity limit.  The question of vanishing viscosity addresses whether or not a solution $v_{\nu}$ of ($NS$) converges in some norm to a solution $v$ of ($E$) with the same initial data as viscosity tends to $0$.  This area of research is active both for solutions in a bounded domain and for weak solutions in the plane.  We focus our attention on the latter case.  

The vanishing viscosity problem is closely tied to uniqueness of solutions to the Euler equations, because the methods used to prove uniqueness can often be applied to show vanishing viscosity.  One of the most important uniqueness results in the plane is due to Yudovich.  He establishes in \cite{Yudovich} the uniqueness of a solution $(v,p)$ to ($E$) in the space $C(\R; L^2(\R^2))\times L^{\infty}_{loc}(\R;L^2(\R^2))$ when $v^0$ belongs to $L^2(\R^2)$ and $\omega^0$ belongs to $L^p(\R^2)\cap L^{\infty}(\R^2)$ for some $p<\infty$.  For this uniqueness class, Chemin proves in \cite{Chemin} that the vanishing viscosity limit holds in the $L^p$-norm, and he establishes a rate of convergence.  (In fact, the author only considers the case $p=2$; however, the proof of the result can easily be generalized to any $p<\infty$.)  

In this paper, we consider the case where initial velocity and initial vorticity are bounded and do not necessarily belong to $L^p(\R^2)$ for any $p<\infty$.  The existence and uniqueness of solutions to ($NS$) without any decay assumptions on the initial velocity is considered by Giga, Inui, and Matsui in \cite{GIM}.  The authors establish the short-time existence and uniqueness of mild solutions $v_{\nu}$ to ($NS$) in the space $C([0,T_0]; BUC(\R^n))$ when initial velocity is in $BUC(\R^n)$, $n\geq 2$.  Here $BUC(\R^n)$ denotes the space of bounded uniformly continuous functions on $\R^n$ (see Theorem \ref{GMS} for details).  In \cite{GMS}, Giga, Matsui, and Sawada prove that when $n=2$, the unique solution can be extended globally in time.

Under the assumption that both initial velocity and initial vorticity belong to $L^{\infty}(\R^2)$, Serfati shows in \cite{Serfati} that a unique weak solution $v$ to ($E$) exists in $L^{\infty}([0,T]; L^{\infty}(\R^2))$ (see Theorem \ref{Serfati}).

We prove that the vanishing viscosity limit holds for short time in the $L^{\infty}$-norm when initial velocity and initial vorticity belong to $L^{\infty}(\R^2)$ (see Theorem \ref{main}).  To establish the result, we consider low and high frequencies of the difference between the the solutions to ($NS$) and ($E$) separately.  For low frequencies, we utilize the structure of mild solutions to ($NS$) as well as the structure of Serfati solutions to ($E$).  For high frequencies, we localize the frequencies of the vorticity formulations of ($NS$) and ($E$), and we consider the difference of the two resulting equations.  We make use of the Littlewood-Paley operators and Bony's paraproduct decomposition to prove the necessary estimates.
\section{A Few Definitions and Technical Lemmas}
We first define the Littlewood-Paley operators.  We let $\varphi \in S(\R^n)$ satisfy supp $\varphi\subset \{\xi\in \R^n: \frac{3}{4} \leq|\xi |\leq \frac{8}{3} \}$, and for every $j\in\Z$ we let $\varphi_j(\xi)=\varphi(2^{-j}\xi)$ (so
${\check{\varphi}}_j(x)=2^{jn}\check{\varphi}(2^jx))$.  Observe that, if $|j-j'|\geq 2$, then supp ${\varphi_j}$ $\cap$ supp
${\varphi_{j'}} = \emptyset$.  We define ${\psi}_n \in S(\R^n)$ by the equality
\begin{equation*}
{\psi}_n (\xi) = 1 - \sum_{j\geq n} \varphi_j(\xi) 
\end{equation*}
for all $\xi\in\R^n$, and for $f\in S'(\R^n)$ we define the operator $S_nf$ by  
\begin{equation*}
S_n f = {\check{\psi}}_n \ast f.
\end{equation*}
In the following sections we will make frequent use of both the homogeneous and the inhomogeneous Littlewood-Paley operators.  For $f\in S'(\R^n)$ and $j\in\Z$, we define the homogeneous Littlewood-Paley operators ${\dot{\Delta}}_j$ by
\begin{equation*}
{\dot{\Delta}}_j f= {\check{\varphi}}_j \ast f,
\end{equation*}
and we define the inhomogeneous Littlewood-Paley operators by
\begin{align*}
    \Delta_j f = \left\{
        \begin{array}{ll}
            0,
                & j < -1, \\
            \check{\psi_0} \ast f,
                & j = -1, \\
            {\check{\varphi}}_j \ast f,
                & j > -1.
        \end{array}
        \right.
\end{align*}  
We remark that the operators $\Delta_j$ and $\dot{\Delta}_j$ coincide when $j\geq 0$, but differ when $j\leq -1$.

In the proof of the main theorem we use the paraproduct decomposition introduced by J.-M. Bony in \cite{Bony}.
We recall the definition of the paraproduct and remainder used in this decomposition. 
\begin{definition}\label{para}
Define the paraproduct of two functions $f$ and $g$ by
\begin{equation*}
T_fg = \sum_{\stackrel{i,j}{i\leq j-2}} \Delta_i f\Delta_j g = \sum_{j=1}^{\infty} S_{j-1}f\Delta_j g.
\end{equation*}
We use $R(f,g)$ to denote the remainder.  $R(f,g)$ is given by the following bilinear operator:  
\begin{equation*}
R(f,g)=\sum_{\stackrel{i,j}{|i-j|\leq 1}} \Delta_if\Delta_jg.
\end{equation*}
\end{definition}
\noindent Bony's decomposition then gives
\begin{equation*}
fg=T_fg + T_gf + R(f,g).
\end{equation*} 
We now define the homogeneous Besov spaces.
\begin{definition}\label{besovhomo}
Let $s\in\R$, $(p,q)\in[1,\infty]\times[1,\infty)$.  We define the homogeneous Besov space ${\dot{B}}^s_{p,q}(\R^n)$ to be the space of tempered distributions $f$ on $\R^n$ such that\\
\begin{equation*}
||f||_{{\dot{B}}^s_{p,q}}:={\left(\sum_{j\in\Z}2^{jqs}{||{\dot{\Delta}}_jf||}^q_{L^p}\right)}^{\frac{1}{q}} < \infty.
\end{equation*} 
When $q=\infty$, write
\begin{equation*}
||f||_{{\dot{B}}^s_{p,\infty}}:={\sup_{j\in\Z}{2^{js}{||{\dot{\Delta}}_jf||}_{L^p}}}.
\end{equation*} 
\end{definition}
\Obsolete{\begin{definition}\label{besov}
Let $s\in\R$, $(p,q)\in[1,\infty]\times[1,\infty)$.  We define the inhomogeneous Besov space $B^s_{p,q}(\R^n)$ to be the space of tempered distributions $f$ on $\R^n$ such that
\begin{equation*}
||f||_{B^s_{p,q}}:=||S_0f||_{L^p}+{\left(\sum_{j=0}^{\infty}2^{jqs}{||{\dot{\Delta}}_jf||}^q_{L^p}\right)}^{\frac{1}{q}} < \infty.
\end{equation*} 
When $q=\infty$, write
\begin{equation*}
||f||_{{B}^s_{p,\infty}}:=||S_0f||_{L^p}+{\sup_{j\geq 0}{2^{js}{||{\dot{\Delta}}_jf||}_{L^p}}}.
\end{equation*} 
\end{definition}}
We also define the inhomogeneous Zygmund spaces.
\begin{definition}
The inhomogeneous Zygmund space $C^s_{\ast}(\R^n)$ is the set of all tempered distributions $f$ on $\R^n$ such that 
\begin{equation*}
||f||_{{C}^s_{\ast}}:=\sup_{j\geq -1}2^{js}||{\Delta}_j f||_{L^{\infty}} < \infty.
\end{equation*} 
\end{definition}
It is well-known that $C^s_{\ast}(\R^n)$ coincides with the classical Holder space $C^s(\R^n)$ when $s$ is not an integer and $s>0$.

We will make frequent use of Bernstein's Lemma.  We refer the reader to \cite{Chemin1}, chapter 2, for a proof of the lemma.
\begin{lem}\label{bernstein}
(Bernstein's Lemma) Let $r_1$ and $r_2$ satisfy $0<r_1<r_2<\infty$, and let $p$ and $q$ satisfy $1\leq p \leq q \leq \infty$. There exists a positive constant $C$ such that for every integer $k$ , if $u$ belongs to $L^p(\R^n)$, and supp $\hat{u}\subset B(0,r_1\lambda)$, then 
\begin{equation}\label{bern1}
\sup_{|\alpha|=k} ||\partial^{\alpha}u||_{L^q} \leq C^k{\lambda}^{k+n(\frac{1}{p}-\frac{1}{q})}||u||_{L^p}.
\end{equation}
Furthermore, if supp $\hat{u}\subset C(0, r_1\lambda, r_2\lambda)$, then 
\begin{equation}\label{bern2}
C^{-k}{\lambda}^k||u||_{L^p} \leq \sup_{|\alpha|=k}||\partial^{\alpha}u||_{L^p} \leq C^{k}{\lambda}^k||u||_{L^p}.
\end{equation} 
\end{lem}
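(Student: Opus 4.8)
The plan is to reduce both estimates to Young's convolution inequality after replacing the frequency projection by convolution against a fixed, rescaled kernel, and then to extract the powers of $\lambda$ by a change of variables.

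For \eqref{bern1}, I would first fix a function $\theta\in S(\R^n)$, compactly supported, with $\theta\equiv 1$ on $\bar B(0,r_1)$. Since supp$\,\hat u\subset B(0,r_1\lambda)$, we have $\hat u(\xi)=\theta(\xi/\lambda)\hat u(\xi)$, so that $\widehat{\partial^\alpha u}(\xi)=(i\xi)^\alpha\theta(\xi/\lambda)\hat u(\xi)$ and hence $\partial^\alpha u=g_{\alpha,\lambda}\ast u$, where $g_{\alpha,\lambda}$ is the inverse Fourier transform of $\xi\mapsto(i\xi)^\alpha\theta(\xi/\lambda)$. Applying Young's inequality with $1+\frac1q=\frac1r+\frac1p$ gives $\|\partial^\alpha u\|_{L^q}\le\|g_{\alpha,\lambda}\|_{L^r}\|u\|_{L^p}$. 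A change of variables $\xi=\lambda\eta$ shows $g_{\alpha,\lambda}(x)=\lambda^{k+n}(\partial^\alpha\phi)(\lambda x)$ with $\phi:=\check\theta$ a fixed Schwartz function, whence $\|g_{\alpha,\lambda}\|_{L^r}=\lambda^{k+n(1/p-1/q)}\|\partial^\alpha\phi\|_{L^r}$, which carries exactly the exponent of $\lambda$ appearing in \eqref{bern1}.

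It remains to bound $\sup_{|\alpha|=k}\|\partial^\alpha\phi\|_{L^r}$ by $C^k$ uniformly in $k$, and this is where I expect the real work to lie. The idea is to estimate a weighted sup-norm: for $N$ large, $(1+|x|^2)^N\partial^\alpha\phi(x)$ is the inverse Fourier transform of $(1-\Delta_\xi)^N\big((i\xi)^\alpha\theta(\xi)\big)$, so $\|(1+|x|^2)^N\partial^\alpha\phi\|_{L^\infty}\le\|(1-\Delta_\xi)^N((i\xi)^\alpha\theta)\|_{L^1}$. Because $\theta$ is supported in a fixed compact set on which $|\xi|\le r_2$, each term produced by expanding $(1-\Delta_\xi)^N$ is bounded by $r_2^{k}$ times a factor polynomial in $k$ (from differentiating $\xi^\alpha$ at most $2N$ times) and a constant depending only on $\theta$ and $N$; choosing $N>n/2$ and absorbing the polynomial factors into the exponential yields $\|\partial^\alpha\phi\|_{L^\infty}\le C^k$ together with the integrable decay needed to control $\|\partial^\alpha\phi\|_{L^1}$, hence $\|\partial^\alpha\phi\|_{L^r}\le C^k$ for every $r$. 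This proves \eqref{bern1}; the upper bound in \eqref{bern2} is the special case $p=q$ (so $r=1$), with $\theta$ now taken supported in an annulus and equal to $1$ on $C(0,r_1,r_2)$.

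For the lower bound in \eqref{bern2} I would recover $u$ from its derivatives of order $k$ using the multinomial identity $|\xi|^{2k}=\sum_{|\alpha|=k}\frac{k!}{\alpha!}\xi^{2\alpha}$. On the annulus $|\xi|\ge r_1\lambda$ the factor $|\xi|^{-2k}$ is harmless, so with the annular cutoff $\theta$ above we may write $\hat u(\xi)=\sum_{|\alpha|=k}m_{\alpha,\lambda}(\xi)\widehat{\partial^\alpha u}(\xi)$, where $m_{\alpha,\lambda}(\xi)=\frac{k!}{\alpha!}\,|\xi|^{-2k}\overline{(i\xi)^\alpha}\,\theta(\xi/\lambda)$ and I used $(i\xi)^\alpha\hat u=\widehat{\partial^\alpha u}$. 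Inverting and applying Young's inequality gives $\|u\|_{L^p}\le\big(\sum_{|\alpha|=k}\|\check m_{\alpha,\lambda}\|_{L^1}\big)\sup_{|\alpha|=k}\|\partial^\alpha u\|_{L^p}$. Scaling again produces a factor $\lambda^{-k}$, and the weighted-sup-norm estimate of the previous paragraph, applied now to the symbols $\frac{k!}{\alpha!}|\eta|^{-2k}\overline{(i\eta)^\alpha}\theta(\eta)$ and combined with the crude bounds $\frac{k!}{\alpha!}\le n^k$ and $\#\{|\alpha|=k\}=\binom{n+k-1}{k}\le C^k$, bounds the sum over $\alpha$ by $C^k\lambda^{-k}$. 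Rearranging yields $\lambda^k\|u\|_{L^p}\le C^k\sup_{|\alpha|=k}\|\partial^\alpha u\|_{L^p}$, completing \eqref{bern2}. The main obstacle throughout is keeping the constants genuinely geometric in $k$: naive estimates produce factorials from differentiating $\xi^\alpha$, and the point of the integration-by-parts/weighted-norm argument is precisely to trade those for the harmless factor $r_2^{k}$ coming from the compact support of $\theta$.
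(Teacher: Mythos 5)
Your proof is correct: the paper does not prove this lemma itself but defers to \cite{Chemin1}, Chapter 2, and your argument---a smooth cutoff equal to one on the ball (resp.\ the annulus), Young's inequality after rescaling the convolution kernel to extract the factor $\lambda^{k+n(\frac1p-\frac1q)}$, and the identity $|\xi|^{2k}=\sum_{|\alpha|=k}\frac{k!}{\alpha!}\xi^{2\alpha}$ to invert the derivatives for the lower bound in \eqref{bern2}---is precisely that standard proof. You have also correctly handled the one genuinely delicate point, namely keeping the constant geometric ($C^k$) rather than factorial in $k$, by trading the derivatives landing on $\xi^\alpha$ in the weighted $(1-\Delta_\xi)^N$ estimate for powers of the fixed radius on the compact support of the cutoff, together with the crude bounds $\frac{k!}{\alpha!}\le n^k$ and $\#\{|\alpha|=k\}\le C^k$.
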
 
As a result of Bernstein's Lemma, we have the following lemma regarding the homogeneous Besov spaces.
\begin{lem}\label{homoprop}
Let $u\in S'$, $s\in\R$, and $p,q\in[1,\infty]$.  For any $k\in\Z$  there exists a constant $C_k$ such that whenever $|\alpha|=k$,
\begin{equation*}
{C}^{-k}||\partial^{\alpha}u||_{{\dot{B}}^s_{p,q}} \leq ||u||_{{\dot{B}}^{s+k}_{p,q}} \leq C^k||\partial^{\alpha}u||_{{\dot{B}}^s_{p,q}}.
\end{equation*}
\end{lem}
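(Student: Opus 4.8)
The plan is to reduce the two-sided Besov estimate to a blockwise $L^p$ estimate supplied by the second part of Bernstein's Lemma, Lemma~\ref{bernstein}, applied on each dyadic frequency annulus. First I would record the two structural facts that make this reduction work. Since $\widehat{\dot\Delta_j u} = \varphi_j\,\hat u = \varphi(2^{-j}\cdot)\hat u$ and $\varphi$ is supported in $\{\frac{3}{4}\leq|\xi|\leq\frac{8}{3}\}$, the spectrum of $\dot\Delta_j u$ lies in the annulus $\{\frac{3}{4}2^j\leq|\xi|\leq\frac{8}{3}2^j\}$; thus each block has frequency support of exactly the form required by (\ref{bern2}), with $\lambda=2^j$, $r_1=\tfrac{3}{4}$, $r_2=\tfrac{8}{3}$. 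Second, because $\dot\Delta_j$ and $\partial^\alpha$ are both convolution (Fourier multiplier) operators they commute, so $\dot\Delta_j\partial^\alpha u=\partial^\alpha\dot\Delta_j u$, and this distribution carries the same spectral localization.

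Next I would apply the second Bernstein estimate (\ref{bern2}) to the function $\dot\Delta_j u$ with $\lambda=2^j$, obtaining
\begin{equation*}
C^{-k}2^{jk}\|\dot\Delta_j u\|_{L^p}\;\leq\;\sup_{|\alpha|=k}\|\partial^\alpha\dot\Delta_j u\|_{L^p}\;\leq\;C^k2^{jk}\|\dot\Delta_j u\|_{L^p}
\end{equation*}
for every $j\in\Z$, with $C$ independent of $j$. The two inequalities of the lemma then follow by inserting these dyadic bounds into the definition of the homogeneous Besov norm. For the left-hand inequality I fix $\alpha$ with $|\alpha|=k$; using $\dot\Delta_j\partial^\alpha u=\partial^\alpha\dot\Delta_j u$ and the upper Bernstein bound gives $\|\dot\Delta_j\partial^\alpha u\|_{L^p}\leq C^k2^{jk}\|\dot\Delta_j u\|_{L^p}$, so multiplying by $2^{js}$, taking the $\ell^q$ norm in $j$ (with the supremum replacing the sum when $q=\infty$), and regrouping $2^{js}\cdot2^{jk}=2^{j(s+k)}$ yields $\|\partial^\alpha u\|_{\dot B^s_{p,q}}\leq C^k\|u\|_{\dot B^{s+k}_{p,q}}$, which is the left inequality after dividing through by $C^k$. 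Note that this direction is valid for each individual multi-index $\alpha$.

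For the right-hand inequality I would run the same summation against the lower Bernstein bound, which produces $2^{jk}\|\dot\Delta_j u\|_{L^p}\leq C^k\sup_{|\alpha|=k}\|\partial^\alpha\dot\Delta_j u\|_{L^p}$; passing to the $\ell^q(2^{js})$ norm gives $\|u\|_{\dot B^{s+k}_{p,q}}\leq C^k\|\partial^\alpha u\|_{\dot B^s_{p,q}}$ in the formulation where the derivative side carries the supremum (equivalently, the finite sum) over $|\alpha|=k$, the combinatorial factor being absorbed into $C^k$. The content of the lemma — and the only step that is not bookkeeping — is precisely this lower bound: it asserts that on a fixed frequency annulus $u$ can be reconstructed from its order-$k$ derivatives with a gain of $2^{-jk}$, which is exactly the elliptic lower estimate in (\ref{bern2}) and is the main obstacle to isolate. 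I would therefore treat the lower half of Bernstein's Lemma as the crux and present the Besov estimates as a routine summation of the resulting dyadic inequalities. A minor technical point to verify is that every manipulation requires each block $\dot\Delta_j u$ to belong to $L^p$, which is guaranteed by the finiteness of the relevant Besov norm; so each inequality is first established for $u$ with finite norm on the appropriate side, and the two-sided equivalence follows.
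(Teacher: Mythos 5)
Your proof is correct and is exactly the argument the paper intends: Lemma \ref{homoprop} is stated there without proof, as an immediate consequence of Bernstein's Lemma, and your blockwise application of (\ref{bern2}) to $\dot{\Delta}_j u$ with $\lambda = 2^j$, using $\dot{\Delta}_j\partial^{\alpha} = \partial^{\alpha}\dot{\Delta}_j$ and then taking the weighted $\ell^q$ norm in $j$, is the standard way to fill that in. Your observation that the right-hand inequality should be read with the supremum (equivalently the finite sum) over $|\alpha|=k$ on the derivative side is also apt --- for a single fixed multi-index it can fail (take $u$ frequency-localized with $\partial^{\alpha}u = 0$ but $u \neq 0$), and the paper's uses of the lemma indeed involve only full gradients, so your reading matches the intended statement.
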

We also make use of the following technical lemma.  We refer the reader to \cite{CK} for a detailed proof.
\begin{lemma}\label{CZhighfreq}
Let $v$ be a divergence-free vector field with vorticity $\omega$, and let $v$ and $\omega$ satisfy the relation $\nabla v=\nabla{\nabla}^{\perp}{\Delta}^{-1}\omega$.  Then there exists an absolute constant $C$ such that for all $j\in\Z$,
    \begin{align*}
        ||{\dot{\Delta}}_j \nabla v||_{L^{\infty}}
            \leq C||{\dot{\Delta}}_j \omega||_{L^{\infty}}.
    \end{align*}
\end{lemma}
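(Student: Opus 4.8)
The plan is to recognize that, in the plane, the stated relation exhibits $\nabla v$ as the image of $\omega$ under a Fourier multiplier whose symbol is homogeneous of degree zero, and then to exploit the single frequency localization to convert this (otherwise $L^\infty$-unbounded) Calder\'on--Zygmund operator into convolution against an $L^1$ kernel whose norm is independent of $j$.

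First I would write the relation $\nabla v = \nabla\nabla^\perp\Delta^{-1}\omega$ on the Fourier side. Each scalar entry of the matrix operator $\nabla\nabla^\perp\Delta^{-1}$ is a multiplier of the form $\sigma_{ab}(\xi) = \xi_a\xi_b/|\xi|^2$ (up to fixed signs and factors of $i$), so $\widehat{\nabla v}(\xi) = \sigma(\xi)\hat\omega(\xi)$, where $\sigma$ is smooth away from the origin and homogeneous of degree $0$. The crucial step is then to insert a fattened Littlewood--Paley projector. I would fix $\widetilde\varphi \in S(\R^2)$, supported in a slightly larger annulus than $\varphi$ and identically $1$ on $\operatorname{supp}\varphi$, and let $\widetilde\Delta_j$ be the Fourier multiplier with symbol $\widetilde\varphi(2^{-j}\,\cdot)$. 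Since $\widetilde\varphi_j\varphi_j = \varphi_j$, we get the reproducing identity $\dot\Delta_j = \widetilde\Delta_j\dot\Delta_j$, whence $\dot\Delta_j\nabla v = \widetilde\Delta_j(\nabla\nabla^\perp\Delta^{-1})\dot\Delta_j\omega$.

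Next I would analyze the operator $\widetilde\Delta_j\nabla\nabla^\perp\Delta^{-1}$, whose symbol is $m_j(\xi) = \widetilde\varphi(2^{-j}\xi)\sigma(\xi)$. Because $\sigma$ is homogeneous of degree $0$, we have $m_j(\xi) = m_0(2^{-j}\xi)$ with $m_0 = \widetilde\varphi\,\sigma$, so the convolution kernel obeys the scaling $K_j(x) = 2^{2j}K_0(2^j x)$ and therefore $\|K_j\|_{L^1} = \|K_0\|_{L^1}$ for every $j$. Since $m_0$ is smooth and compactly supported (the singularity of $\sigma$ at the origin is annihilated by $\widetilde\varphi$), $K_0 = \check{m}_0$ is Schwartz and in particular lies in $L^1(\R^2)$. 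Finally I would apply Young's inequality: $\|\dot\Delta_j\nabla v\|_{L^\infty} = \|K_j * \dot\Delta_j\omega\|_{L^\infty} \leq \|K_j\|_{L^1}\|\dot\Delta_j\omega\|_{L^\infty} = \|K_0\|_{L^1}\|\dot\Delta_j\omega\|_{L^\infty}$, giving the claim with absolute constant $C = \|K_0\|_{L^1}$.

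The main obstacle here is conceptual rather than computational: the operator $\nabla\nabla^\perp\Delta^{-1}$ is a genuine Calder\'on--Zygmund operator and is \emph{not} bounded on $L^\infty$, so one cannot bound $\|\dot\Delta_j\nabla v\|_{L^\infty}$ by $\|\dot\Delta_j\omega\|_{L^\infty}$ naively. The localization is what saves the argument, since it renders the symbol compactly supported, and the $j$-uniformity of the constant hinges entirely on the degree-$0$ homogeneity of $\sigma$, which forces the $L^1$ norm of the rescaled kernel to be scale-invariant. The only point requiring genuine care is to check that $\widetilde\varphi$ can be chosen once and for all, independent of $j$, with $\widetilde\varphi \equiv 1$ on $\operatorname{supp}\varphi$, so that the identity $\dot\Delta_j = \widetilde\Delta_j\dot\Delta_j$ holds at every dyadic scale with the same multiplier.
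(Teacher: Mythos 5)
Your proof is correct, and it is essentially the standard argument: the paper itself does not prove this lemma but defers to the cited reference \cite{CK}, whose proof proceeds exactly as you do, by inserting a fattened annular cutoff so that $\dot{\Delta}_j\nabla v = \widetilde{\Delta}_j(\nabla\nabla^{\perp}\Delta^{-1})\dot{\Delta}_j\omega$, using the degree-zero homogeneity of the symbol to get a $j$-independent $L^1$ kernel bound by scaling, and concluding with Young's inequality. Your attention to the key subtleties --- that $\nabla\nabla^{\perp}\Delta^{-1}$ is not $L^{\infty}$-bounded globally and that the cutoff $\widetilde{\varphi}$ must be chosen once and for all --- is exactly right.
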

\Obsolete{\begin{remark}
In what follows, we repeatedly use the equality $\nabla v=\nabla{\nabla}^{\perp}{\Delta}^{-1}\omega$, where $\omega=\omega(v)$.  This equality is justified for nondecaying velocity and vorticity by the following argument.  If we let $v'={\nabla}^{\perp}{\Delta}^{-1}\omega$, then we see that $v$ and $v'$ have the same divergence and vorticity, given by $\omega$.  Therefore, it follows by the equality
\begin{equation}
\Delta v_i = \partial_i \text{div } v + \sum_i \partial_i \omega(v)
\end{equation}
that $\Delta v = \Delta v'$.  Hence $v-v'=f$, where $f$ is a harmonic polynomial.  To see that $f$ is a constant, note that
\begin{equation*}
\begin{split}
&||v'||_{BMO} = ||{\nabla}^{\perp}{\Delta}^{-1}\omega||_{BMO}\\
&\qquad \leq ||{\nabla}^{\perp}{\Delta}^{-1}\nabla v||_{BMO} \leq C||v||_{L^{\infty}},\\
\end{split}
\end{equation*}
where we used the boundedness of Calderon-Zygmund operators from $L^{\infty}$ to $BMO$ to get the last inequality.  We conclude that $f=v'-v$ belongs to $BMO$, and since $BMO$ contains no polynomial of degree one or higher, $f$ is a constant.  This yields the desired equality,
\begin{equation*}
\nabla v=\nabla v'=\nabla{\nabla}^{\perp}{\Delta}^{-1}\omega.
\end{equation*}           
\end{remark}}
We will need a uniform bound in time on the $L^{\infty}$-norms of the vorticities corresponding to the solutions of ($NS$) and ($E$).  For fixed $\nu\geq 0$, we have that 
\begin{equation}\label{NSvortbound}
||\omega_{\nu}(t)||_{L^{\infty}} \leq ||\omega_{\nu}^0||_{L^{\infty}}
\end{equation}
for all $t\geq 0$.  One can prove this bound by applying the maximum principle to the vorticity formulations of ($NS$) and ($E$).  We refer the reader to Lemma 3.1 of \cite{ST} for a detailed proof.  
\section{Properties of Nondecaying solutions to the fluid equations}
In this section, we summarize what is known about nondecaying solutions to ($NS$) and ($E$).  We begin with the mild solutions to ($NS$) established in \cite{GIM}.  By a mild solution to ($NS$), we mean a solution $v_{\nu}$ of the integral equation
\begin{equation}\label{INT}
v_{\nu}(t,x)=e^{t\nu\Delta}v_{\nu}^0 - \int_0^t{e^{(t-s)\nu\Delta} \begin{bf} P \end{bf} (v_{\nu}\cdot\nabla v_{\nu})(s)}ds.
\end{equation}
In ($\ref{INT}$), $e^{\tau\nu\Delta}$ denotes convolution with the Gauss kernel; that is, for $f\in S'$, $e^{\tau\nu\Delta}f=G_{\tau\nu}\ast f$, where $G_{\tau\nu}(x)=\frac{1}{4\pi \tau\nu}\exp \{\frac{-{|x|}^2}{4\tau\nu }\}$.  Also, $\begin{bf} P \end{bf}$ denotes the Helmholtz projection operator with $ij$-component given by $\delta_{ij} + R_i R_j$, where $R_l=(-\Delta)^{-\frac{1}{2}}\partial_l$ is the Riesz operator.  In \cite{GIM}, Giga, Inui, and Matsui prove the following result regarding mild solutions in $\R^n$, $n\geq 2$.
\begin{theorem}\label{GMS}
Let $BUC$ denote the space of bounded, uniformly continuous functions, and assume ${v_{\nu}}^0$ belongs to $BUC(\R^n)$ for fixed $n\geq 2$.  There exists a $T_0>0$ and a unique solution to {\em ($\ref{INT}$)} in the space $C([0,T_0]; BUC(\R^n))$ with initial data ${v_{\nu}}^0$.  Moreover, if we assume div ${v_{\nu}}^0=0$, and if we define $p_{\nu}(t)=\sum_{i,j=1}^2R_iR_j v_{\nu i} v_{\nu j}(t)$ for each $t\in[0,T_0]$, then $v_{\nu}$ belongs to $C^{\infty}([0,T_0]\times \R^n)$ and solves ($NS$).
\end{theorem}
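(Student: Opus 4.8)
The plan is to realize (\ref{INT}) as a fixed-point equation $v_{\nu}=\Phi(v_{\nu})$, where
\[
\Phi(u)(t)=e^{t\nu\Delta}v_{\nu}^0+B(u,u)(t),\qquad B(u,w)(t)=-\int_0^t e^{(t-s)\nu\Delta}\mathbf{P}\,(u\cdot\nabla w)(s)\,ds,
\]
and to solve it by the contraction mapping principle in the Banach space $X_T=C([0,T];BUC(\R^n))$ with norm $\|u\|_{X_T}=\sup_{0\le t\le T}\|u(t)\|_{L^{\infty}}$. The two ingredients I would establish first concern the linear and the bilinear parts of $\Phi$. For the linear part, $e^{t\nu\Delta}$ is convolution with the Gaussian $G_{t\nu}$, hence a contraction on $L^{\infty}$; since convolution of a $BUC$ function with an $L^1$ kernel again lies in $BUC$ and $t\mapsto G_{t\nu}\ast f$ is continuous into $BUC$ for $f\in BUC$, the linear term lies in $X_T$ with $\|e^{\,\cdot\,\nu\Delta}v_{\nu}^0\|_{X_T}\le\|v_{\nu}^0\|_{L^{\infty}}$.

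The crux, and the main obstacle, is the bilinear estimate. One cannot estimate $\mathbf{P}(u\cdot\nabla w)$ naively in $L^{\infty}$, because $\mathbf{P}$ has $ij$-component $\delta_{ij}+R_iR_j$ involving the Riesz transforms, which are \emph{not} bounded on $L^{\infty}$, and because $\nabla w$ is not controlled for $w\in BUC$. Both difficulties are resolved simultaneously by treating $e^{\tau\nu\Delta}\mathbf{P}\operatorname{div}$ as a single convolution operator: writing the nonlinearity in divergence form $\mathbf{P}(u\cdot\nabla w)=\mathbf{P}\operatorname{div}(u\otimes w)$ (legitimate on the solenoidal fields we work with), its matrix kernel $K_{ijk}(\tau,\cdot)$ has symbol $e^{-\tau\nu|\xi|^2}(\delta_{ij}-\xi_i\xi_j/|\xi|^2)(i\xi_k)$, which is bounded, smooth away from the origin, and rapidly decaying. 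By parabolic scaling $K_{ijk}(\tau,x)=(\tau\nu)^{-(n+1)/2}\Phi_{ijk}(x/\sqrt{\tau\nu})$ for a fixed profile $\Phi_{ijk}$ that decays like $|x|^{-(n+1)}$, so $\Phi_{ijk}\in L^1(\R^n)$ and $\|K_{ijk}(\tau,\cdot)\|_{L^1}\le C(\tau\nu)^{-1/2}$. Consequently $\|e^{\tau\nu\Delta}\mathbf{P}\operatorname{div}(u\otimes w)\|_{L^{\infty}}\le C(\tau\nu)^{-1/2}\|u\|_{L^{\infty}}\|w\|_{L^{\infty}}$, and integrating the time singularity via $\int_0^t((t-s)\nu)^{-1/2}\,ds\le 2\nu^{-1/2}t^{1/2}$ yields
\[
\|B(u,w)\|_{X_T}\le C\,\nu^{-1/2}T^{1/2}\,\|u\|_{X_T}\|w\|_{X_T}.
\]
The same $L^1$ kernels send $L^{\infty}$ into $BUC$, so $B(u,w)\in X_T$.

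With these bounds, existence and uniqueness follow from the standard abstract lemma: if $B$ is bilinear on a Banach space $X$ with $\|B(u,w)\|_X\le\eta\|u\|_X\|w\|_X$ and $\|y\|_X<1/(4\eta)$, then $u=y+B(u,u)$ has a unique solution in the ball $\{\|u\|_X\le 2\|y\|_X\}$. Taking $X=X_{T_0}$, $y=e^{\,\cdot\,\nu\Delta}v_{\nu}^0$, and $\eta=C\nu^{-1/2}T_0^{1/2}$, the hypothesis $\eta\|y\|<1/4$ holds once $T_0<c\,\nu\,\|v_{\nu}^0\|_{L^{\infty}}^{-2}$, which is positive for each fixed $\nu$; this produces the unique mild solution in $C([0,T_0];BUC(\R^n))$. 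Since $\operatorname{div}$ commutes with $e^{t\nu\Delta}$ and $\operatorname{div}\mathbf{P}=0$, the solenoidal subspace of $X_{T_0}$ is invariant under $\Phi$, so when $\operatorname{div}v_{\nu}^0=0$ the divergence-form rewriting used above is justified throughout.

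For the final assertion, assume $\operatorname{div}v_{\nu}^0=0$, so $\operatorname{div}v_{\nu}(t)=0$ for all $t$. Smoothness comes from a bootstrap on (\ref{INT}): each application of the kernel estimate trades one power of the integrable singularity $(\tau\nu)^{-1/2}$ for a spatial derivative, showing inductively that $\sup_{\epsilon\le t\le T_0}\|\partial^{\alpha}v_{\nu}(t)\|_{L^{\infty}}<\infty$ for every multi-index $\alpha$ and every $\epsilon>0$; differentiating (\ref{INT}) in time and using $\partial_{\tau}e^{\tau\nu\Delta}=\nu\Delta e^{\tau\nu\Delta}$ controls the time derivatives as well, whence $v_{\nu}\in C^{\infty}((0,T_0]\times\R^n)$, continuous up to $t=0$ with values in $BUC$. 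Differentiating (\ref{INT}) then shows $v_{\nu}$ solves $\partial_t v_{\nu}-\nu\Delta v_{\nu}=-\mathbf{P}(v_{\nu}\cdot\nabla v_{\nu})$ pointwise; writing $\mathbf{P}=I-\nabla\Delta^{-1}\operatorname{div}$ and using $\Delta^{-1}\partial_i\partial_j=-R_iR_j$ together with $\operatorname{div}(v_{\nu}\cdot\nabla v_{\nu})=\partial_i\partial_j(v_{\nu i}v_{\nu j})$ (valid for divergence-free fields) identifies the gradient part as $-\nabla p_{\nu}$ with $p_{\nu}=\sum_{i,j}R_iR_j\,v_{\nu i}v_{\nu j}$, so $(v_{\nu},p_{\nu})$ solves $(NS)$. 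The essential difficulty throughout is the kernel estimate of the second paragraph, which is precisely what compensates for the failure of $\mathbf{P}$ to act boundedly on $L^{\infty}$.
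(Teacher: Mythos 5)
The paper itself contains no proof of Theorem \ref{GMS}: it is quoted as an external result of Giga, Inui, and Matsui \cite{GIM}, so the only meaningful comparison is with the original argument --- and your proposal reconstructs essentially that argument. The decisive point, which you correctly identify and execute, is that although $\mathbf{P}$ is not bounded on $L^{\infty}$, the composite $e^{\tau\nu\Delta}\mathbf{P}\,\mathrm{div}$ is a convolution operator whose kernel scales parabolically with a profile decaying like $|x|^{-(n+1)}$, hence $\|K(\tau,\cdot)\|_{L^1}\le C(\tau\nu)^{-1/2}$; the contraction in $C([0,T];BUC(\R^n))$, the bound $\int_0^t((t-s)\nu)^{-1/2}\,ds\le 2\nu^{-1/2}t^{1/2}$, and the derivative bootstrap for smoothness are all as in \cite{GIM} and are sound as outlined.

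Three loose ends should be tightened before this proves the statement exactly as quoted. First, the theorem asserts uniqueness in all of $C([0,T_0];BUC(\R^n))$, while your abstract bilinear lemma yields it only in the ball of radius $2\|y\|_X$; the standard repair is to estimate the difference of two arbitrary mild solutions with the same bilinear bound on a sufficiently short subinterval, conclude coincidence there, and continue by restarting the integral equation via the semigroup property. Second, the existence--uniqueness clause does not assume $\mathrm{div}\,v_{\nu}^0=0$, so the rewriting $\mathbf{P}(u\cdot\nabla u)=\mathbf{P}\,\mathrm{div}(u\otimes u)$ cannot be justified by invariance of the solenoidal subspace alone; as in \cite{GIM}, one takes the divergence form, defined through the kernel of $e^{\tau\nu\Delta}\mathbf{P}\,\mathrm{div}$, as the meaning of the nonlinearity in (\ref{INT}), after which your fixed point applies verbatim for arbitrary $BUC$ data (for divergence-free data, the only case used in this paper, the two formulations coincide). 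Third, in identifying the pressure you apply $R_iR_j=-\Delta^{-1}\partial_i\partial_j$ to the bounded, nondecaying product $v_{\nu i}v_{\nu j}$; Riesz transforms are not defined pointwise on $L^{\infty}$, so $p_{\nu}$ must be interpreted through the renormalized kernel ($R_iR_j\colon L^{\infty}\to BMO$, defined modulo constants) --- precisely the subtlety emphasized in the paper's remark following the theorem and in Kato \cite{K}. None of these affects the architecture of your proof; all are routine patches. Note finally that your existence time $T_0\sim\nu\,\|v_{\nu}^0\|_{L^{\infty}}^{-2}$ degenerates as $\nu\to 0$, which is harmless here since the paper works on a $\nu$-independent interval only via the global two-dimensional extension of \cite{GMS}.
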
 
\begin{remark}
In the above theorem, one can assume $v_{\nu}^0\in L^{\infty}(\R^n)$ and draw similar conclusions.  Indeed, with this weaker assumption the theorem is still true as long as one replaces $C([0,T_0]; BUC(\R^n))$ with $C_w([0,T_0]; BUC(\R^n))$, where $C_w$ denotes the space of weakly continuous functions.  For the main theorem of this paper, we assume that $\omega^0$ is bounded on $\R^2$, which, by Lemma \ref{C1}, implies that $v^0_{\nu}$ belongs to $C^{\alpha}(\R^2)$ for every $\alpha<1$.  Therefore, the statement of the theorem with $v_{\nu}^0\in BUC(\R^2)$ applies in our case.
\end{remark}  
\begin{remark}
Note that when the solution $v_{\nu}$ belongs to $L^r(\R^n)$ for $r<\infty$, the pressure can be determined from $v_{\nu}$ up to a constant, giving uniqueness of $(v_{\nu}, \nabla p_{\nu})$ without any assumptions on the pressure.  However, without a decay assumption on the velocity, this relation between velocity and pressure does not necessarily follow.  As a result, the authors are forced to place a restriction on the pressure in the statement of the theorem in order to establish uniqueness of $(v_{\nu}, \nabla p_{\nu})$.  The question of necessary assumptions on $p_{\nu}$ to ensure uniqueness of $(v_{\nu}, \nabla p_{\nu})$ is addressed by Kato in \cite{K}.   He shows that $(v_{\nu}, \nabla p_{\nu})$ can be uniquely determined when $p_{\nu}$ belongs to $L^1_{loc}([0,T); BMO(\R^n))$.  We refer the reader to \cite{K} for further details.
\end{remark}
In \cite{GMS}, Giga, Matsui, and Sawada show that when $n=2$, the solution to ($NS$) established in Theorem \ref{GMS} can be extended to a global-in-time smooth solution.  Moreover, in \cite{ST}, Sawada and Taniuchi show that if $v_{\nu}^0$ and $\omega_{\nu}^0$ belong to $L^{\infty}(\R^2)$, then the following exponential estimate holds:
\begin{equation}\label{NSunifbd}
||v_{\nu}(t)||_{L^{\infty}} \leq C||v_{\nu}^0||_{L^{\infty}}e^{Ct||\omega_{\nu}^0||_{L^{\infty}}}.
\end{equation} 
For ideal incompressible fluids, Serfati proves the following existence and uniqueness result in \cite{Serfati}.
\begin{theorem}\label{Serfati}
Let $v^0$ and $\omega^0$ belong to $L^{\infty}(\R^2)$, and let $c\in\R$.  For every $T>0$ there exists a unique solution $(v,p)$ to ($E$) in the space $L^{\infty}([0,T]; L^{\infty}(\R^2))\times L^{\infty}([0,T]; C(\R^2))$ with $\omega\in L^{\infty}([0,T]; L^{\infty}(\R^2))$, $p(0)=c$, and with $\frac {p(t,x)}{|x|}\rightarrow 0$ as $|x|\rightarrow \infty$.  
\end{theorem}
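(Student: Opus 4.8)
The plan is to reconstruct the velocity from the vorticity by means of a \emph{renormalized} Biot--Savart law, to build a solution by approximating the bounded data with decaying data handled by the Yudovich theory of \cite{Yudovich}, and to obtain uniqueness by an Osgood argument in place of the usual $L^2$ energy method. The central difficulty, present at every stage, is that for merely bounded $\omega$ the identity $v=\nabla^{\perp}\Delta^{-1}\omega$ is not absolutely convergent: the Biot--Savart kernel $K=\nabla^{\perp}\Delta^{-1}\delta_0$ decays only like $|x|^{-1}$, so convolution against a nondecaying $\omega$ is ill-defined and no $L^{\infty}$ bound of $v$ in terms of $\omega$ is available directly.

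To circumvent this I would fix a radial cutoff $a\in C^{\infty}_c(\R^2)$ with $a\equiv 1$ near the origin and, combining the transport equation $\partial_t\omega+v\cdot\nabla\omega=0$ with the momentum equation $\partial_t v=-\mathbb{P}\,\mathrm{div}(v\otimes v)$ (where $\mathbb{P}$ is the Leray projector), derive Serfati's identity
\begin{equation*}
v_j(t)-v_j(0)=(aK_j)*\big(\omega(t)-\omega(0)\big)-\int_0^t \partial_k\partial_l\big[(1-a)K_j\big]*(v_kv_l)(s)\,ds.
\end{equation*}
The gain is that $aK_j\in L^1(\R^2)$, since the $|x|^{-1}$ singularity is locally integrable in two dimensions and $a$ truncates the tail, while $\partial_k\partial_l[(1-a)K_j]$ has no singularity at the origin and decays like $|x|^{-3}$, hence also lies in $L^1(\R^2)$. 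Every convolution above is therefore absolutely convergent for bounded $\omega$ and bounded $v\otimes v$, and one reads off the a priori bound
\begin{equation*}
\|v(t)\|_{L^{\infty}}\leq \|v(0)\|_{L^{\infty}}+C\|\omega^0\|_{L^{\infty}}+C\int_0^t\|v(s)\|_{L^{\infty}}^2\,ds.
\end{equation*}
Replacing $a$ by $a(\cdot/\lambda)$ rescales the two constants as $\lambda$ and $\lambda^{-1}$, and optimizing $\lambda$ converts this Riccati inequality into a bound $\|v(t)\|_{L^{\infty}}\leq M(T)$ valid on every $[0,T]$.

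For existence I would regularize the data, taking $\omega^0_n\in L^{\infty}\cap L^p$ with $\|\omega_n^0\|_{L^{\infty}}\leq\|\omega^0\|_{L^{\infty}}$, and solve ($E$) by the Yudovich theory to obtain $v_n$. The transported vorticity obeys $\|\omega_n(t)\|_{L^{\infty}}\leq\|\omega^0\|_{L^{\infty}}$ by the maximum principle, exactly as in \eqref{NSvortbound}, and the bound of the previous paragraph gives $\|v_n(t)\|_{L^{\infty}}\leq M(T)$ uniformly in $n$. Since $\nabla v_n=\nabla\nabla^{\perp}\Delta^{-1}\omega_n$, Lemma \ref{CZhighfreq} yields $\|\dot{\Delta}_j\nabla v_n\|_{L^{\infty}}\leq C\|\omega^0\|_{L^{\infty}}$ uniformly in $j$ and $n$, which is precisely the log-Lipschitz regularity of $v_n$; an Arzel\`a--Ascoli argument on compact sets then extracts a locally uniform limit $v$ that satisfies Serfati's identity, and hence solves ($E$) with the required bounds. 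The pressure is recovered from $-\Delta p=\partial_i\partial_j(v_iv_j)$ by the same near/far renormalization, and the normalizations $p(0)=c$ and $p(t,x)/|x|\to 0$ remove the harmonic ambiguity, the sublinear decay forcing the ambiguity to be constant and $p(0)=c$ fixing that constant.

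Uniqueness is where I expect the main obstacle, precisely because no $L^2$ energy estimate is available without decay. Given two solutions with identical data, I would apply Serfati's identity to the difference $w=v-\bar v$ and estimate $\|w(t)\|_{L^{\infty}}$; the difference of the two transported vorticities is controlled through the difference of the associated flow maps, and since the uniform bound $\|\dot{\Delta}_j\nabla v\|_{L^{\infty}}\leq C\|\omega^0\|_{L^{\infty}}$ gives only the log-Lipschitz modulus $|v(x)-v(y)|\lesssim\|\omega^0\|_{L^{\infty}}|x-y|\log(e+|x-y|^{-1})$, the flow-map difference produces an inequality of the form $\|w(t)\|_{L^{\infty}}\leq C\int_0^t\mu(\|w(s)\|_{L^{\infty}})\,ds$ with $\mu(r)=r\log(e+r^{-1})$. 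As $\int_{0^+}dr/\mu(r)=\infty$, Osgood's lemma forces $w\equiv 0$. The delicate point is that the renormalized identity, rather than the ordinary Biot--Savart law, must be used throughout to keep the velocity difference expressed by absolutely convergent integrals while retaining an Osgood-admissible modulus; the full details are carried out in \cite{Serfati}.
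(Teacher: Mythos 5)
The paper does not prove this theorem: it is quoted from \cite{Serfati}, and only its consequences --- the integral representation (\ref{EINT}) and the exponential bound (\ref{Eunifbd}) --- are used later, so the comparison must be with the cited proof. Your proposal reconstructs essentially that argument: the cutoff-renormalized Biot--Savart (Serfati) identity, the a priori bound with the cutoff scale optimized in $\lambda$ (note that squaring your optimized inequality and applying Gronwall yields exactly the exponential estimate (\ref{Eunifbd}), not merely a finite $M(T)$), approximation by decaying Yudovich data, and Osgood uniqueness through the log-Lipschitz modulus coming from Lemma \ref{CZhighfreq}. Two points deserve to be made explicit. First, in the uniqueness step, for an \emph{arbitrary} competitor in the stated class you may only write $\partial_t v = -\mathbb{P}\,\mathrm{div}(v\otimes v)$ --- and hence derive the Serfati identity --- after using the hypothesis $\frac{p(t,x)}{|x|}\to 0$ to identify $p(t)$ with $\sum_{i,j}\Delta^{-1}\partial_i\partial_j(v_iv_j)(t)$ up to a constant (a harmonic function of sublinear growth is constant); without that hypothesis uniqueness is simply false, since $v(t,x)=U(t)$ with $U(0)=0$ and $p(t,x)=-U'(t)\cdot x$ solves ($E$) with zero data for any $U$, so the pressure condition must be invoked inside the uniqueness argument itself, not only when recovering $p$ during existence. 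Second, in the approximation step, truncating $\omega^0$ alone does not produce initial velocities converging to $v^0$, because $v^0$ is not determined by $\omega^0$ (constants, for instance, are lost); the standard repair is to truncate a stream function for $v^0$, which yields compactly supported, uniformly bounded vorticities and $v^0_n\to v^0$ locally uniformly with uniform $L^{\infty}$ bounds.
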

The unique solution to ($E$) given in Theorem \ref{Serfati} satisfies an integral representation analogous to that for mild solutions to ($NS$).  Specifically, the Serfati solution satisfies the equation
\begin{equation}\label{EINT}
v(t,x) = v(0,x)-\int_0^t{\begin{bf} P \end{bf} (v\cdot\nabla v)(s)}ds.
\end{equation} 
\Obsolete{To establish ($\ref{EINT}$), we use the following lemma.
\begin{lemma}\label{Easint}
Let $(v,p)$ be the unique solution to ($E$) which satisfies the conditions of Theorem \ref{Serfati}.  Then the pressure $p$ satisfies the equality 
\begin{equation*}
p(t) = \sum_{i,j=1}^2\Delta^{-1}\partial_i\partial_j (v^iv^j)(t) +C
\end{equation*}
for every $t\geq 0$, where $C$ is an absolute constant.
\end{lemma}
\begin{proof}
Let $p'= \sum_{i,j=1}^2\Delta^{-1}\partial_i\partial_j (v^iv^j)$.  Taking the divergence of ($E$), we see that 
\begin{equation*}
\Delta p = \sum_{i,j=1}^2\partial_i\partial_j (v^iv^j).
\end{equation*}
Therefore, $\Delta p=\Delta p'$, which implies that $p$ and $p'$ differ by a harmonic polynomial $f$.  To complete the proof, it suffices to show that $f$ is a constant.  

Since $v$ is bounded, it follows by properties of Calderon-Zygmund operators that $p'$ belongs to the space $BMO$ (see, for example, \cite{Stein}).  Moreover, by the properties of the pressure $p$ corresponding to the Serfati solution $v$, we have that $\frac{p(x)}{|x|}$ approaches $0$ as $|x|$ approaches infinity.  Assume, for contradiction, that $f$ is a polynomial of degree greater than or equal to one.  
Given $\epsilon>0$, for $|x|$ sufficiently large, $p'(x)$ satisfies
\begin{equation*}
\begin{split}
|p'(x)| &\geq |f(x)|-|p(x)|\\
&\geq |f(x)|-\epsilon|x|.
\end{split} 
\end{equation*}
It follows that for large $x$, $p'$ behaves like a polynomial of degree greater than or equal to one, which contradicts the membership of $p'$ to $BMO$.  Therefore $f$ must be a constant.  This completes the proof.           
\end{proof}  }
Serfati also establishes an estimate analogous to ($\ref{NSunifbd}$) for the Euler equations.  He proves the bound
\begin{equation}\label{Eunifbd}
||v(t)||_{L^{\infty}} \leq C||v^0||_{L^{\infty}}e^{Ct||\omega^0||_{L^{\infty}}}.
\end{equation}

Before we state the main theorem of the paper, we prove a result giving Holder regularity of solutions to ($NS$) and ($E$) with initial velocity and vorticity in $L^{\infty}(\R^2)$.  We prove that under these assumptions on the initial data, the corresponding solution to ($NS$) or ($E$) belongs to the Zygmund space $C^{1}_{\ast}$.  We prove the lemma only for the solution to ($E$).  The proof for the ($NS$) solution is identical.
\begin{lemma}\label{C1}
Let $v$ be the unique solution to ($E$) given by Theorem \ref{Serfati} with bounded initial vorticity and bounded initial velocity.  Then the following estimate holds:
\begin{equation*}
||v(t)||_{C^{1}_{\ast}} \leq C||v^0||_{L^{\infty}}e^{Ct||\omega^0||_{L^{\infty}}} + C||\omega^0||_{L^{\infty}}.
\end{equation*}
\end{lemma}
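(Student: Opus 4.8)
The plan is to estimate the Zygmund norm directly from its definition,
\begin{equation*}
||v(t)||_{C^1_*}=\sup_{j\geq -1}2^j||\Delta_j v(t)||_{L^\infty},
\end{equation*}
by treating the single low-frequency block $j=-1$ separately from the high-frequency blocks $j\geq 0$, since the two regimes are controlled by genuinely different mechanisms (the $L^\infty$ bound on velocity versus the $L^\infty$ bound on vorticity). First I would dispose of the $j=-1$ term. Here $\Delta_{-1}v=\check{\psi}_0\ast v$, so Young's inequality gives $||\Delta_{-1}v(t)||_{L^\infty}\leq ||\check{\psi}_0||_{L^1}||v(t)||_{L^\infty}$, and invoking Serfati's exponential estimate ($\ref{Eunifbd}$) yields
\begin{equation*}
2^{-1}||\Delta_{-1}v(t)||_{L^\infty}\leq C||v^0||_{L^\infty}e^{Ct||\omega^0||_{L^\infty}}.
\end{equation*}
This accounts for the first term on the right-hand side of the claimed bound.

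For the high frequencies $j\geq 0$, I would use that here $\Delta_j$ and $\dot{\Delta}_j$ coincide, so the Fourier support of $\dot{\Delta}_j v$ lies in an annulus of the form $C(0,r_1 2^j,r_2 2^j)$. Applying the annular Bernstein inequality ($\ref{bern2}$) with $k=1$, $\lambda=2^j$, and $p=\infty$ converts the frequency weight into a gradient,
\begin{equation*}
2^j||\dot{\Delta}_j v(t)||_{L^\infty}\leq C\sup_{|\alpha|=1}||\partial^\alpha\dot{\Delta}_j v(t)||_{L^\infty}\leq C||\dot{\Delta}_j\nabla v(t)||_{L^\infty}.
\end{equation*}
Now Lemma \ref{CZhighfreq} lets me trade the gradient of the velocity for the vorticity at the same frequency, giving $||\dot{\Delta}_j\nabla v(t)||_{L^\infty}\leq C||\dot{\Delta}_j\omega(t)||_{L^\infty}$. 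Finally, a second application of Young's inequality together with the scale-invariance of the $L^1$ norm of the convolution kernel, $||\check{\varphi}_j||_{L^1}=||\check{\varphi}||_{L^1}$, bounds each block by $C||\omega(t)||_{L^\infty}$ uniformly in $j$, and the vorticity maximum principle ($\ref{NSvortbound}$) replaces this by $C||\omega^0||_{L^\infty}$. Thus $\sup_{j\geq 0}2^j||\dot{\Delta}_j v(t)||_{L^\infty}\leq C||\omega^0||_{L^\infty}$, which is the second term. Combining the two regimes gives the stated estimate.

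The main obstacle is not any of these routine Littlewood--Paley manipulations but rather verifying that the hypothesis of Lemma \ref{CZhighfreq} actually applies to the Serfati solution, namely that $\nabla v=\nabla\nabla^\perp\Delta^{-1}\omega$ holds for velocity and vorticity that need not decay at infinity. For decaying data this Biot--Savart relation is standard, but in the nondecaying setting $\nabla^\perp\Delta^{-1}\omega$ and $v$ could a priori differ by a nonconstant harmonic polynomial. I would resolve this by observing that $v'=\nabla^\perp\Delta^{-1}\omega$ shares the divergence and vorticity of $v$, so $\Delta(v-v')=0$ and $v-v'$ is a harmonic polynomial; the boundedness of Calder\'on--Zygmund operators from $L^\infty$ to $BMO$ forces $v'\in BMO$, and since $v\in L^\infty\subset BMO$ while $BMO$ contains no polynomial of degree $\geq 1$, the difference must be constant. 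This justifies differentiating through the relation and hence the use of Lemma \ref{CZhighfreq}. Since the proof for the ($NS$) solution replaces ($\ref{Eunifbd}$) by ($\ref{NSunifbd}$) verbatim, no separate argument is needed there.
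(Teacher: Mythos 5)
Your proof is correct and follows essentially the same route as the paper: Young's inequality plus the Serfati bound (\ref{Eunifbd}) for the low-frequency block, and Bernstein's Lemma with Lemma \ref{CZhighfreq} and the vorticity bound (\ref{NSvortbound}) for the blocks $j\geq 0$. Your additional verification that $\nabla v=\nabla\nabla^{\perp}\Delta^{-1}\omega$ holds in the nondecaying setting (via the $BMO$/harmonic-polynomial argument) is a legitimate and welcome piece of diligence rather than a divergence from the paper's method.
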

\begin{proof}
Write 
\begin{equation*}
||v(t)||_{C^{1}_{\ast}} \leq C||S_0 v(t)||_{L^{\infty}} + \sup_{j\geq 0}2^j||{\dot{\Delta}}_j v(t)||_{L^{\infty}}.
\end{equation*}
We first use Young's inequality to bound the low frequency term by $C||v(t)||_{L^{\infty}}$.  We then apply the bound given in ($\ref{Eunifbd}$).  For the high frequency terms, we apply Bernstein's Lemma and the uniform estimate on the $L^{\infty}$-norm of the vorticity given in ($\ref{NSvortbound}$) to bound the supremum by $C||\omega^0||_{L^{\infty}}$.  This completes the proof. 
\end{proof}
\section{Statement and Proof of the Main Result}
\noindent We are now prepared to state the main theorem.
\begin{theorem}\label{main}
Let $v_{\nu}$ be the unique solution to ($NS$) and $v$ the unique solution to ($E$), both with initial data $v^0$ and $\omega^0$ belonging to $L^{\infty}(\R^2)$, and with $p_{\nu}$ and $p$ satisfying the conditions of Theorems \ref{GMS} and \ref{Serfati}, respectively.  Then there exist constants $C$ and $C_1$, depending only on $||v^0||_{L^{\infty}}$ and $||\omega^0||_{L^{\infty}}$, such that the following estimate holds for any fixed $\alpha\in(0,1)$ and for $\nu$ sufficiently small:
\begin{equation}\label{thebigone}
||v_{\nu}-v||_{L^{\infty}([0,T];L^{\infty}(\R^2))} \leq C(T+1)e^{C_1T}{(\nu)}^{\frac{\alpha}{2}}\{e^{C(e^{C_1T}-1)}\}^{-\frac{1}{2}\log_2 {\nu}}.
\end{equation}
\end{theorem}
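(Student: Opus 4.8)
The plan is to split the difference $v_\nu-v$ into low- and high-frequency pieces at a dyadic level $N$ that I will ultimately take of size $-\tfrac12\log_2\nu$ (so $2^N\sim\nu^{-1/2}$), estimate the two pieces by genuinely different mechanisms, and then close with Gronwall's inequality. Throughout I will use the uniform velocity bounds (\ref{NSunifbd}), (\ref{Eunifbd}) (which give $\|v_\nu(s)\|_{L^\infty}+\|v(s)\|_{L^\infty}\lesssim e^{C_1 s}$), the vorticity bound (\ref{NSvortbound}), the regularity $v_\nu,v\in C^1_\ast$ from Lemma \ref{C1} (whence $v^0\in C^\alpha$ for every $\alpha<1$), the Biot--Savart bound of Lemma \ref{CZhighfreq}, and Bernstein's Lemma \ref{bernstein}.

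For the low frequencies I subtract the integral representations (\ref{INT}) and (\ref{EINT}); since $v_\nu^0=v^0$,
\[
S_N(v_\nu-v)(t)=S_N\big(e^{t\nu\Delta}-I\big)v^0-\int_0^t S_N\big[(e^{(t-s)\nu\Delta}-I)\mathbf{P}(v_\nu\cdot\nabla v_\nu)+\mathbf{P}(v_\nu\cdot\nabla v_\nu-v\cdot\nabla v)\big]\,ds.
\]
The first term is where the rate originates: the $C^\alpha$-smoothing estimate for the heat semigroup gives $\|(e^{t\nu\Delta}-I)v^0\|_{L^\infty}\lesssim(t\nu)^{\alpha/2}\|v^0\|_{C^\alpha}$, producing the factor $\nu^{\alpha/2}$. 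Writing the nonlinearities in divergence form $v_\nu\cdot\nabla v_\nu=\operatorname{div}(v_\nu\otimes v_\nu)$ and using Bernstein to absorb the derivative onto the low-frequency cutoff, the heat-difference term is controlled by $(t-s)\nu\,2^{3N}\|v_\nu\|_{L^\infty}^2$, while the difference of the nonlinearities is controlled by $2^N e^{C_1 s}\|v_\nu-v\|_{L^\infty}$ after the factorization $v_\nu\otimes v_\nu-v\otimes v=(v_\nu-v)\otimes v_\nu+v\otimes(v_\nu-v)$.

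For the high frequencies I pass to the vorticity formulation, localize, and compare. Applying $\dot\Delta_j$ to the two vorticity equations and subtracting, the block $w_j:=\dot\Delta_j(\omega_\nu-\omega)$ solves a transport--diffusion equation $\partial_t w_j+v_\nu\cdot\nabla w_j-\nu\Delta w_j=-(v_\nu-v)\cdot\nabla\dot\Delta_j\omega+\nu\Delta\dot\Delta_j\omega-\big([\dot\Delta_j,v_\nu\cdot\nabla](\omega_\nu-\omega)+[\dot\Delta_j,(v_\nu-v)\cdot\nabla]\omega\big)$ with $w_j(0)=0$. Since the drift is divergence-free, the $L^\infty$ maximum principle for transport--diffusion yields $\|w_j(t)\|_{L^\infty}\le\int_0^t\|\text{(right-hand side)}(s)\|_{L^\infty}\,ds$; here the diffusion term contributes $\nu\,2^{2j}\|\omega^0\|_{L^\infty}$ (by Bernstein and (\ref{NSvortbound})), the coupling term $2^j\|\omega^0\|_{L^\infty}\|v_\nu-v\|_{L^\infty}$, and the commutators are estimated through Bony's decomposition (Definition \ref{para}) together with Lemma \ref{CZhighfreq}, in terms of $\|\omega^0\|_{L^\infty}$ and neighboring blocks. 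I then recover the velocity by Biot--Savart and Bernstein, $\|(I-S_N)(v_\nu-v)\|_{L^\infty}\lesssim\sum_{j\ge N}2^{-j}\|w_j\|_{L^\infty}$, splitting the sum at the level $2^j\sim(\nu t)^{-1/2}$ where the refined bound overtakes the trivial bound $\|w_j\|_{L^\infty}\lesssim\|\omega^0\|_{L^\infty}$; this is what fixes $N$ and converts the diffusion loss into a net gain of size $\nu^{1/2}$, which is dominated by the $\nu^{\alpha/2}$ coming from the low-frequency term.

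Adding the two estimates produces an integral inequality $\|v_\nu-v\|_{L^\infty}(t)\lesssim(T+1)e^{C_1T}\nu^{\alpha/2}+\int_0^t\kappa(s)\|v_\nu-v\|_{L^\infty}(s)\,ds$, where, after summing the surviving high-frequency coupling over the $\sim N$ relevant dyadic blocks, the coefficient is only logarithmically large, $\kappa(s)\sim N\,e^{C_1 s}$ with $N\sim-\tfrac12\log_2\nu$; Gronwall's inequality then yields the factor $\exp\!\big(N\,C(e^{C_1T}-1)\big)=\{e^{C(e^{C_1T}-1)}\}^{-\frac12\log_2\nu}$, giving (\ref{thebigone}). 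Short time enters precisely so that the net exponent of $\nu$, namely $\tfrac\alpha2-\tfrac{C(e^{C_1T}-1)}{2\ln 2}$, remains positive. I expect the main obstacle to be the commutator estimates at the $L^\infty$ endpoint, where the vorticity is merely bounded and $\nabla v$ lies only in $B^0_{\infty,\infty}$: keeping the paraproduct bounds sharp enough that the surviving coupling coefficient grows like $N$ rather than like $2^N$ is exactly what makes the final Gronwall factor of double-exponential type rather than genuinely divergent.
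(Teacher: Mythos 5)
Your skeleton (low/high frequency split, integral representations for the low piece, localized vorticity comparison with a transport--diffusion maximum principle for the high piece, Gronwall with a logarithmic coefficient) matches the paper's, and your heat-kernel estimate $\|(e^{t\nu\Delta}-I)v^0\|_{L^\infty}\lesssim(t\nu)^{\alpha/2}\|v^0\|_{C^\alpha}$ is an acceptable substitute for the paper's Lemma \ref{Gauss}. But two load-bearing steps fail as you have set them up. First, the low-frequency accounting: you cut at a single large level $2^N\sim\nu^{-1/2}$, and then Bernstein applied to $\operatorname{div}(v_\nu\otimes v_\nu - v\otimes v)$ on frequencies up to $2^N$ produces a coefficient $2^N\sim\nu^{-1/2}$, not $N$, in front of $\|v_\nu-v\|_{L^\infty}$; Gronwall with coefficient $\nu^{-1/2}e^{C_1s}$ yields a factor $e^{c\nu^{-1/2}T}$ that swamps $\nu^{\alpha/2}$, and no rate survives. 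Your claim that ``the coefficient is only logarithmically large, $\kappa(s)\sim Ne^{C_1s}$'' is asserted exactly where your decomposition produces $2^N$. The paper's fix is a \emph{three}-band split (\ref{3terms}): the Duhamel/Hardy-space argument is applied only below frequency $2^{-n}\sim\nu^{1/2}$, where $\|\nabla\mathbf{P}\psi_{-n}\|_{L^1}\le C2^{-n}$ gives a \emph{gain} of $\nu^{1/2}$ rather than a loss; the middle band $[2^{-n},2^n]$ is absorbed via the logarithmic interpolation $\|(S_n-S_{-n})f\|_{L^\infty}\le Cn\|f\|_{\dot B^0_{\infty,\infty}}$ (see \ref{zoey} and \ref{log}); and Gronwall is run on the weaker quantity $\|v_\nu-S_nv\|_{\dot B^0_{\infty,\infty}}$, not directly on $\|v_\nu-v\|_{L^\infty}$, which is precisely what makes an $n$-sized coefficient attainable.

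Second, and more fundamentally, your high-frequency comparison pairs $\omega_\nu$ with the \emph{full} Euler vorticity $\omega$ under the drift $v_\nu$, so your commutators $[\dot\Delta_j,v_\nu\cdot\nabla](\omega_\nu-\omega)$ and $[\dot\Delta_j,(v_\nu-v)\cdot\nabla]\omega$ involve velocities that are only log-Lipschitz: $\nabla v$ lies merely in $B^0_{\infty,\infty}$ and $\|\nabla v_\nu\|_{L^\infty}$ is not bounded uniformly in $\nu$. At the endpoint pairing $(\dot B^{-1}_{\infty,\infty},L^\infty)$ such commutators lose a logarithm per block with no mechanism capping the total loss at size $N$ --- this is exactly the obstacle you name in your closing sentence but leave unresolved, and it is where the proof actually lives. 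The paper's key device is to compare $\omega_\nu$ instead with the \emph{truncated} Euler pair $(v_n,\omega_n)=(S_nv,S_n\omega)$, which satisfies the localized equation (\ref{1}) carrying a Constantin--E--Titi flux error $\tau_n=r_n(v,\omega)-(v-v_n)(\omega-\omega_n)$. Truncation buys two things at once: $\|\nabla v_n(s)\|_{L^\infty}\le Cne^{C_1s}$ (see \ref{gradbound}), so every commutator coefficient in Lemma \ref{comm} is of size $n$ rather than $2^n$; and the $C^\alpha$ regularity of $v$ from Lemma \ref{C1} makes $r_n$ of size $2^{-n\alpha}$ (see \ref{useholder}--\ref{nonterm2}), which is the second source of the rate. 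Your plan also has $w_j(0)=0$, whereas the correct comparison has $\bar\omega_n(0)=\omega^0-S_n\omega^0=O(2^{-n})$ in $\dot B^{-1}_{\infty,\infty}$ --- harmless, but symptomatic of comparing the wrong pair. Without the truncation $(v_n,\omega_n)$ and the three-band split, neither the $\kappa\sim N$ coefficient nor the endpoint commutator bound can be substantiated, so the argument as proposed does not close.
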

\begin{remark}
The constant $C_1$ in ($\ref{thebigone}$) is equal to $A||\omega^0||_{L^{\infty}}$, where $A$ is an absolute constant.  Therefore, one can conclude from Theorem \ref{main} that the vanishing viscosity limit holds on a time interval with length inversely proportional to the size of $||\omega^0||_{L^{\infty}}$.  Moreover, the smallness of $\nu$ required to conclude ($\ref{thebigone}$) depends on $T$ (and therefore on $||\omega^0||_{L^{\infty}}$).  Specifically, larger $T$ requires smaller $\nu$ for ($\ref{thebigone}$) to hold (see Remark \ref{killnu}). 
\end{remark}
\begin{proof}
Let $v$ be the unique solution to ($E$) with bounded initial velocity and vorticity.  In what follows, we let $v_n=S_nv$, and $\omega_n=S_n\omega(v)$.  We have the following inequality:
\begin{equation}\label{3terms}
\begin{split}
&||v_{\nu}-v||_{L^{\infty}([0,T];L^{\infty}(\R^2))} \leq ||S_{-n}(v_{\nu}-v)||_{L^{\infty}([0,T];L^{\infty}(\R^2))}\\
&\qquad+ ||(Id-S_{-n})(v_{\nu}-v_n)||_{L^{\infty}([0,T];L^{\infty}(\R^2))}\\
&\qquad+ ||(Id-S_{-n})(v_{n}-v)||_{L^{\infty}([0,T];L^{\infty}(\R^2))}.
\end{split}
\end{equation}  
We will estimate each of the three terms on the right hand side of the inequality in ($\ref{3terms}$).  We begin with the third term, since it is the easiest to handle.  We use the definition of $v_n$, Bernstein's Lemma, Lemma \ref{CZhighfreq}, and (\ref{NSvortbound}) to obtain the inequality 
\begin{equation}\label{term3}
||(Id-S_{-n})(v_{n}-v)||_Y \leq C2^{-n}||\omega^0||_{L^{\infty}}.
\end{equation} 
To bound the first term on the right hand side of ($\ref{3terms}$), we will use the integral representation of the $L^{\infty}(\R^2)$ solution of ($NS$) given in (\ref{INT}), as well as the integral representation of the Serfati solution to ($E$) given in ($\ref{EINT}$).
\noindent We prove the following proposition.
\begin{prop}\label{lowfreq}
Let $v_{\nu}$ and $v$ be solutions to ($NS$) and ($E$), respectively, satisfying the assumptions of Theorem \ref{main}.  Then for any $\alpha\in (0,1)$ and for any $\delta>0$,
\begin{equation*}
||S_{-n}(v_{\nu}-v)(t)||_{L^{\infty}} \leq Ce^{C_1t}\left\{2^{-n} + {\delta}^{\alpha}+ \exp\left(-\frac{\delta^2}{4\nu t}\right)\right\},
\end{equation*}
where $C$ and $C_1$ depend only on $||v^0||_{L^{\infty}}$ and $||\omega^0||_{L^{\infty}}$.
\end{prop}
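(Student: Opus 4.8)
The plan is to subtract the two integral representations and project onto low frequencies. Since $v_\nu$ and $v$ share the initial datum $v^0$, combining (\ref{INT}) and (\ref{EINT}) and applying $S_{-n}$ gives
\begin{equation*}
\begin{split}
S_{-n}(v_\nu-v)(t)&=S_{-n}\bigl(e^{t\nu\Delta}-Id\bigr)v^0\\
&\quad-\int_0^t S_{-n}\Bigl[e^{(t-s)\nu\Delta}\mathbf P(v_\nu\cdot\nabla v_\nu)(s)-\mathbf P(v\cdot\nabla v)(s)\Bigr]\,ds.
\end{split}
\end{equation*}
I would estimate the linear term and the nonlinear integral separately. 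The point to keep in mind is that the $\delta$-dependent terms will come entirely from the linear term, while the $2^{-n}$ term comes from the nonlinearity.

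For the linear term I would write $(e^{t\nu\Delta}-Id)v^0(x)=\int_{\R^2}G_{t\nu}(y)\bigl(v^0(x-y)-v^0(x)\bigr)\,dy$, using $\int G_{t\nu}=1$, and split the integral at $|y|=\delta$. On $\{|y|<\delta\}$ I would use that $\omega^0\in L^\infty(\R^2)$ forces $v^0\in C^\alpha(\R^2)$ for every $\alpha<1$ (Lemma \ref{C1} together with the embedding $C^1_\ast\hookrightarrow C^\alpha$), so the increment is $O(|y|^\alpha)$ and this piece is $\lesssim\delta^\alpha$. On $\{|y|\ge\delta\}$ I would bound the increment by $2\|v^0\|_{L^\infty}$ and compute the Gaussian tail exactly, $\int_{|y|\ge\delta}G_{t\nu}(y)\,dy=\exp(-\delta^2/(4\nu t))$ in two dimensions. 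Since $S_{-n}$ is convolution against $\check\psi_{-n}$, whose $L^1$ norm is independent of $n$, it is uniformly bounded on $L^\infty$ and both bounds survive.

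For the nonlinear integral I would exploit the divergence-free condition to write $v_\nu\cdot\nabla v_\nu=\mathrm{div}(v_\nu\otimes v_\nu)$ and likewise for $v$, then estimate the two nonlinearities separately by the triangle inequality; no Gronwall argument is needed, since we only want the low-frequency part and each nonlinearity is individually small there. The key point is that the composite operator $S_{-n}\mathbf P\,\mathrm{div}$ is convolution with an $L^1$ kernel of norm $O(2^{-n})$: keeping the derivative and the projection together, its symbol is (up to constants) $\psi_{-n}(\xi)\,\xi_i\xi_j\xi_k/|\xi|^2$, which is homogeneous of degree one, and the scaling of the Littlewood--Paley cutoff extracts the factor $2^{-n}$ (equivalently, one applies Bernstein's Lemma \ref{bernstein} to the frequency-localized function). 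Since $e^{(t-s)\nu\Delta}$ commutes with these Fourier multipliers and is an $L^\infty$ contraction, each nonlinearity contributes $\lesssim 2^{-n}\|v_\nu(s)\|_{L^\infty}^2$ (resp. $\lesssim 2^{-n}\|v(s)\|_{L^\infty}^2$). Inserting the uniform bounds (\ref{NSunifbd}) and (\ref{Eunifbd}) and integrating in $s$ yields $\lesssim 2^{-n}e^{C_1 t}$ with $C_1$ proportional to $\|\omega^0\|_{L^\infty}$. Collecting the three contributions and bounding the linear pieces by $e^{C_1 t}$ (which exceeds $1$) gives the stated inequality.

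The step I expect to be the main obstacle is justifying the $2^{-n}$ gain through the Helmholtz projection in the nonlinear term. The projection $\mathbf P$, with $ij$-component $\delta_{ij}+R_iR_j$, is not bounded on $L^\infty$, so one cannot peel off the derivative, bound $S_{-n}\mathbf P$ on $L^\infty$, and only then use Bernstein: the homogeneous degree-zero symbol $\xi_i\xi_j/|\xi|^2$ produces a kernel that decays too slowly to lie in $L^1$. The fix is to keep the full symbol $\psi_{-n}(\xi)\,\xi_i\xi_j\xi_k/|\xi|^2$ together; being homogeneous of degree one it vanishes at the origin and its inverse Fourier transform decays like $|x|^{-3}$, hence is integrable, and rescaling produces exactly the factor $2^{-n}$. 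Everything else is a routine application of the a priori $L^\infty$ bounds and the Hölder regularity of the data.
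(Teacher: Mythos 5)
Your proposal is correct, and its skeleton is exactly the paper's: subtract the two integral representations (\ref{INT}) and (\ref{EINT}) after applying $S_{-n}$, treat the heat-semigroup term acting on $v^0$ by splitting the Gaussian at $|y|=\delta$ (using $v^0\in C^{\alpha}$ from Lemma \ref{C1} on $\{|y|\le\delta\}$ and the tail $\exp(-\delta^2/(4\nu t))$ outside, which is the paper's Lemma \ref{Gauss}), and bound each nonlinear term separately by $C2^{-n}\|v_\nu(s)\|_{L^\infty}^2$ resp.\ $C2^{-n}\|v(s)\|_{L^\infty}^2$ before inserting (\ref{NSunifbd}) and (\ref{Eunifbd}); no Gronwall argument appears in either version. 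The one genuine difference is how the crucial $O(2^{-n})$ kernel bound for $S_{-n}\mathbf{P}\,\mathrm{div}$ is obtained. The paper follows Taniuchi and gets it from Hardy-space theory: $\|\nabla\mathbf{P}\psi_{-n}\|_{L^1}\le\|\nabla\mathbf{P}\psi_{-n}\|_{\mathcal{H}^1}\le\|\nabla\psi_{-n}\|_{\mathcal{H}^1}\le C2^{-n}$, using that $\mathbf{P}$ (built from Riesz transforms) is bounded on $\mathcal{H}^1(\R^2)$. You instead prove the same estimate by direct symbol analysis: the composite multiplier $\psi_{-n}(\xi)\,\xi_i\xi_j\xi_k/|\xi|^2$ scales to $2^{-n}$ times a fixed compactly supported symbol that is homogeneous of degree one near the origin, whose inverse Fourier transform decays like $|x|^{-3}$ in $\R^2$ and hence lies in $L^1$ (your dyadic-Bernstein remark is the clean way to make this rigorous: the pieces $\varphi_j m_0$ contribute $L^1$ kernel norms summing like $\sum_{j\le 2}2^{j}<\infty$). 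You also correctly identify the pitfall both arguments must avoid --- $\mathbf{P}$ alone is not bounded on $L^\infty$, and $\psi_{-n}(\xi)\xi_i\xi_j/|\xi|^2$ has a kernel with borderline $|x|^{-2}$ decay, so the derivative must not be peeled off first. Your route is more self-contained and elementary (no Hardy-space machinery, only scaling and kernel decay), while the paper's buys brevity by citing the known $\mathcal{H}^1$-boundedness of the Helmholtz projection; both yield the same estimate (\ref{NSlowfreq})--(\ref{Elowfreq}) and hence the proposition.
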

\begin{proof}
To prove Proposition $\ref{lowfreq}$, we apply the operator $S_{-n}$ to ($\ref{INT}$) and ($\ref{EINT}$), we subtract the modification of ($\ref{EINT}$) from that of ($\ref{INT}$), and we take the $L^{\infty}$-norm to get 
\begin{equation*}
\begin{split}
&||S_{-n}(v_{\nu}-v)(t)||_{L^{\infty}} \leq ||S_{-n}(e^{t\nu\Delta}v^0-v^0)||_{L^{\infty}}\\
& +  \int_0^t{||S_{-n}e^{(t-s)\nu\Delta} \begin{bf} P \end{bf} (v_{\nu}\cdot\nabla v_{\nu})(s)||_{L^{\infty}}}ds + \int_0^t{||S_{-n} \begin{bf} P \end{bf} (v\cdot\nabla v)(s)||_{L^{\infty}}}ds.
\end{split}
\end{equation*}
We first estimate $||S_{-n}e^{(t-s)\nu\Delta} \begin{bf} P \end{bf} (v_{\nu}\cdot\nabla v_{\nu})(s)||_{L^{\infty}}$.  We follow an argument of Taniuchi in \cite{Taniuchi} which uses the boundedness of the Helmholtz projection operator on the Hardy space ${\mathcal H}^1(\R^2)$.  We have  
\begin{equation}\label{NSlowfreq}
\begin{split}
&||S_{-n}e^{(t-s)\nu\Delta} \begin{bf} P \end{bf} (v_{\nu}\cdot\nabla v_{\nu})(s)||_{L^{\infty}}\\
&\qquad \leq C2^{-n}||v_{\nu}(s)||^2_{L^{\infty}},
\end{split}
\end{equation}
after applying Holder's inequality and the series of inequalities given by $||\nabla \begin{bf}P \end{bf}\psi_{-n}||_{L^1(\R^2)} \leq ||\nabla \begin{bf}P \end{bf}\psi_{-n}||_{{\mathcal H}^1(\R^2)}\leq ||\nabla\psi_{-n}||_{{\mathcal H}^1(\R^2)}\leq C2^{-n}$.  Proofs of ($\ref{NSlowfreq}$) can also be found in \cite{ST} and \cite{ST1}.  

Similarly, we have
\begin{equation}\label{Elowfreq}
||S_{-n}\begin{bf}P\end{bf}(v\cdot \nabla v)(s)||_{L^{\infty}} \leq  C2^{-n}||v(s)||^2_{L^{\infty}}.  
\end{equation}
Using ($\ref{NSunifbd}$) and ($\ref{Eunifbd}$), we can bound $||v_{\nu}(s)||_{L^{\infty}}$ and $||v(s)||_{L^{\infty}}$ with a constant depending on $t$, $||v^0||_{L^{\infty}}$, and $||\omega^0||_{L^{\infty}}$.

It remains to estimate $||S_{-n}(e^{t\nu\Delta}v^0-v^0)||_{L^{\infty}}$.  To bound this difference, we use the following lemma.
\begin{lemma}\label{Gauss}
Let $u$ belong to the Holder space $C^{\alpha}(\R^2)$.  Then for any fixed $\delta>0$, the following estimate holds:
\begin{equation*}
||e^{t\nu\Delta}u-u||_{L^{\infty}} \leq C||u||_{L^{\infty}}\frac{1}{\exp( \frac{{\delta}^2}{4\nu t})} + {\delta}^{\alpha}||u||_{C^{\alpha}}.
\end{equation*}  
\end{lemma}
\begin{proof}
We fix $\delta>0$ and write 
\begin{equation}\label{1000}
\begin{split}
&|e^{t\nu\Delta}u(x)-u(x)| \leq \left |\int_{|y|\leq\delta}{G_{t\nu}(y)u(x-y)}dy -u(x)\right |\\
&\qquad\qquad + \left|\int_{|y|>\delta}{G_{t\nu}(y)u(x-y)}dy\right|.
\end{split}
\end{equation}
After integrating, we can bound the second term on the right hand side of ($\ref{1000}$) by
\begin{equation}\label{1001}
C||u||_{L^{\infty}}\exp\left(-\frac{\delta^2}{4\nu t}\right). 
\end{equation}
To estimate the first term on the right hand side of ($\ref{1000}$), we use the property that the integral over $\R^2$ of the Gauss kernel is equal to one to write 
\begin{equation*}
\begin{split}
&\left|\int_{|y|\leq\delta}{ G_{t\nu}(y)u(x-y)}dy -u(x)\right|\\
& \leq \left|\int_{|y|\leq\delta}{ G_{t\nu}(y)u(x-y)}dy -\int_{\R^2}{G_{t\nu}(y)u(x)}dy\right|\\
&\leq \int_{|y|\leq \delta}{G_{t\nu}(y)|u(x-y)-u(x)|}dy + \int_{|y|> \delta}{G_{t\nu}(y)|u(x)|}dy\\
& \leq \sup_{|y|\leq\delta} |u(x-y)-u(x)| + C||u||_{L^{\infty}}\exp\left(-\frac{\delta^2}{4\nu t}\right),
\end{split}
\end{equation*}
where we utilized the bound given in ($\ref{1001}$) on the second term.  We now use the membership of $u$ to $C^{\alpha}(\R^2)$ for every $\alpha<1$ to bound the first term by ${\delta}^{\alpha}||u||_{C^{\alpha}}$.  This completes the proof.
\end{proof}
We apply Lemma \ref{Gauss} with $u=v^0$, and we combine the resulting estimate with $(\ref{NSlowfreq})$, $(\ref{Elowfreq})$, and Lemma \ref{C1}  to complete the proof of Proposition \ref{lowfreq}.  
\end{proof}
\begin{remark}\label{killnu}
If we let $\nu=2^{-2n}$ and $\delta=2^{-n\alpha}$, then, since $\alpha\in(0,1)$ is arbitrary, the estimate in Proposition \ref{lowfreq} reduces to
\begin{equation}\label{23}
||S_{-n}(v_{\nu}-v)(t)||_{\infty} \leq Ce^{C_1t}2^{-n\alpha}
\end{equation}
for $n\geq N$, with $N$ sufficiently large.  We remark here that, because of the appearance of $t$ on the right hand side of the inequality in Lemma \ref{Gauss}, the size of $N$ necessary to make ($\ref{23}$) hold depends on $t$.  In particular, larger $t$ requires larger $N$.  In Theorem \ref{main}, we work on a finite time interval $[0,T]$.  Therefore, we can choose $N$ large enough so that ($\ref{23}$) holds for all $t\in[0,T]$.
\end{remark}
It remains to bound the second term on the right hand side of ($\ref{3terms}$), given by $||(Id-S_{-n})(v_{\nu}-v_n)(t)||_{L^{\infty}}$.  We prove the following estimate.
\begin{prop}\label{technical}
Let $v_{\nu}$ and $v$ be solutions to ($NS$) and ($E$), respectively, satisfying the properties of Theorem \ref{main}.  Then there exist constants $C$ and $C_1$, depending only on the initial data, such that the following estimate holds for any fixed $\alpha\in(0,1)$ and for sufficiently large $n$:
\begin{equation}\label{zozozo}
||(Id-S_{-n})(v_{\nu}-v_n)(t)||_{L^{\infty}} \leq C(t+1)e^{C_1t}2^{-n\alpha}e^{Cn(e^{C_1t}-1)}.
\end{equation}
\end{prop}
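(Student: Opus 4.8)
The plan is to pass to the vorticity formulation and to estimate, block by block in frequency, the vorticity of the high-frequency velocity difference. Write $\eta = \omega_{\nu}-\omega_n$, where $\omega_n = S_n\omega$; since $\omega_n = \omega(v_n)$, the field $v_{\nu}-v_n$ is divergence-free with vorticity $\eta$. By Lemma \ref{CZhighfreq} together with the lower Bernstein inequality \eqref{bern2}, one has $\|\dot{\Delta}_j(v_{\nu}-v_n)\|_{L^{\infty}} \leq C2^{-j}\|\dot{\Delta}_j\eta\|_{L^{\infty}}$ for every $j$, so that
\begin{equation*}
\|(Id-S_{-n})(v_{\nu}-v_n)(t)\|_{L^{\infty}} \leq C\sum_{j>-n}2^{-j}\|\dot{\Delta}_j\eta(t)\|_{L^{\infty}}.
\end{equation*}
I would therefore reduce \eqref{zozozo} to a weighted estimate on the dyadic pieces of $\eta$. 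At $t=0$ one has $\eta(0)=(Id-S_n)\omega^0$, whose contribution to the weighted sum is $\lesssim 2^{-n}\|\omega^0\|_{L^{\infty}}$, so the initial data is already below the target level.

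First I would derive the equation for $\eta$. Subtracting $S_n$ applied to the Euler vorticity equation $\partial_t\omega+v\cdot\nabla\omega=0$ from the Navier-Stokes vorticity equation $\partial_t\omega_{\nu}+v_{\nu}\cdot\nabla\omega_{\nu}=\nu\Delta\omega_{\nu}$, and organizing the transport around the common drift $v_{\nu}$, gives
\begin{equation*}
\partial_t\eta + v_{\nu}\cdot\nabla\eta - \nu\Delta\eta = -(v_{\nu}-v)\cdot\nabla\omega_n - [v\cdot\nabla,\,S_n]\omega + \nu\Delta\omega_n.
\end{equation*}
Applying $\dot{\Delta}_j$, using that $v_{\nu}$ is divergence-free (so the localized transport term does not increase the $L^{\infty}$ norm, up to the commutator $[v_{\nu}\cdot\nabla,\dot{\Delta}_j]\eta$), and using the $L^{\infty}$-contractivity of $e^{(t-s)\nu\Delta}$, I would obtain
\begin{equation*}
\|\dot{\Delta}_j\eta(t)\|_{L^{\infty}} \leq \|\dot{\Delta}_j\eta(0)\|_{L^{\infty}} + \int_0^t \Big( \|[v_{\nu}\cdot\nabla,\dot{\Delta}_j]\eta\|_{L^{\infty}} + \|\dot{\Delta}_j((v_{\nu}-v)\cdot\nabla\omega_n)\|_{L^{\infty}} + \|\dot{\Delta}_j[v\cdot\nabla,S_n]\omega\|_{L^{\infty}} + \nu\|\dot{\Delta}_j\Delta\omega_n\|_{L^{\infty}} \Big)\,ds.
\end{equation*}

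The heart of the argument is estimating the four source terms with Bony's paraproduct decomposition and commutator estimates, keeping every derivative loss at the logarithmic ($\sim n$) level rather than a full power $2^n$. For the drift commutator I would decompose $[v_{\nu}\cdot\nabla,\dot{\Delta}_j]\eta$ into paraproducts and remainders and use $\|\dot{\Delta}_k\nabla v_{\nu}\|_{L^{\infty}}\leq C\|\dot{\Delta}_k\omega_{\nu}\|_{L^{\infty}}\leq C\|\omega^0\|_{L^{\infty}}$ from Lemma \ref{CZhighfreq} and \eqref{NSvortbound}; summing these bounds over the $\sim n$ active scales produces a factor $\sim n\|\omega^0\|_{L^{\infty}}$ multiplying the weighted Besov norm of $\eta$. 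In the term $(v_{\nu}-v)\cdot\nabla\omega_n$ I would split $v_{\nu}-v = S_{-n}(v_{\nu}-v) + (Id-S_{-n})(v_n-v) + (Id-S_{-n})(v_{\nu}-v_n)$: the first piece is bounded by Proposition \ref{lowfreq} with the choice of Remark \ref{killnu}, which is exactly what supplies the gain $2^{-n\alpha}$; the second by \eqref{term3}; and the third reproduces the very quantity being estimated, making the argument self-referential. The commutator $[v\cdot\nabla,S_n]\omega$ is frequency-localized near $2^n$ and is handled directly with paraproducts, Bernstein, and \eqref{NSvortbound}; the viscous term, with $\nu=2^{-2n}$ as in Remark \ref{killnu}, contributes at the level $2^{-n}$ since $\nu\,2^{2j}\leq 1$ for $j\leq n$.

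Assembling these bounds, multiplying by $2^{-j}$, and summing over $j>-n$, I would arrive at a Gronwall inequality of the form $X(t) \leq S(t) + Cn\int_0^t e^{C_1 s}X(s)\,ds$, where $X(t)=\|(Id-S_{-n})(v_{\nu}-v_n)(t)\|_{L^{\infty}}$, the source satisfies $S(t)\lesssim 2^{-n\alpha}e^{C_1 t}$ by the previous paragraph, and the coefficient $Cn\,e^{C_1 s}$ records both the logarithmic derivative loss over the $\sim n$ dyadic scales and the growth \eqref{Eunifbd}, \eqref{NSunifbd} of the velocity fields. The integrating factor $\exp\!\big(\int_0^t Cn\,e^{C_1 s}\,ds\big)$ is $\exp\!\big(Cn(e^{C_1 t}-1)\big)$ up to absorbing $C_1$ into $C$, so solving the inequality yields precisely the bound $C(t+1)e^{C_1 t}2^{-n\alpha}e^{Cn(e^{C_1 t}-1)}$ of \eqref{zozozo}. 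The main obstacle is the bookkeeping in the preceding paragraph: arranging every product and commutator so that the derivative always falls on a velocity factor controlled through its vorticity (keeping the loss of order $n$, not $2^n$), and isolating the self-referential term cleanly enough that the Gronwall inequality closes with the correct coefficient.
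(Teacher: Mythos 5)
Your proposal is correct in outline and is, at bottom, the same argument the paper distributes across Proposition \ref{technical}, Lemma \ref{key}, and Lemma \ref{comm}: pass to the vorticity formulation, localize in frequency, apply the transport--diffusion $L^{\infty}$ estimate (Lemma \ref{Hmidi}), keep every commutator and paraproduct loss at order $n$ using Lemma \ref{CZhighfreq} and (\ref{NSvortbound}), feed in Proposition \ref{lowfreq} with the choices of Remark \ref{killnu} for the $2^{-n\alpha}$ gain and $\nu=2^{-2n}$, and close with Gronwall, whose integrating factor $\exp\bigl(Cn(e^{C_1t}-1)\bigr)$ is exactly the shape of (\ref{zozozo}). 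Where you genuinely deviate is in the arrangement of the error equation, and these are also the points where your sketch is thinnest. You transport by $v_{\nu}$ and keep the raw commutator $[v\cdot\nabla,S_n]\omega$ as a source; the paper instead transports \emph{both} localized equations by the mollified field $v_n$, whose Lipschitz norm is $\lesssim ne^{C_1s}$ uniformly in $\nu$ and over all scales ((\ref{gradbound})), and replaces your commutator by the Constantin--E--Titi flux $\tau_n(v,\omega)=r_n(v,\omega)-(v-v_n)(\omega-\omega_n)$, with $r_n$ estimated through the H\"older regularity of $v$ (Lemma \ref{C1} and (\ref{useholder})) --- this, together with Proposition \ref{lowfreq}, is the paper's source of $2^{-n\alpha}$. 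With your drift, the paraproduct coefficient at scale $j$ is $\|S_j\nabla v_{\nu}\|_{L^{\infty}}$, which grows linearly in $j$, not capped at $n$: your phrase ``summing over the $\sim n$ active scales'' is not justified as stated, since $\eta=\omega_{\nu}-S_n\omega$ is not frequency-truncated, and the scales $j>n$ must be absorbed separately using $2^{-j}\|\dot{\Delta}_j\eta\|_{L^{\infty}}\lesssim 2^{-j}\|\omega^0\|_{L^{\infty}}$ (so that $j2^{-j}\lesssim n2^{-n}$); likewise your claim that $[v\cdot\nabla,S_n]\omega$ is ``frequency-localized near $2^n$'' is true of its paraproduct pieces but not of the remainder. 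Both points are repairable, e.g.\ via the identity $S_n(v\omega)-v_nS_n\omega=r_n(v,\omega)-(v-v_n)(\omega-\omega_n)$ in divergence form, so yours is a variant rather than a different proof; the paper's choice of $v_n$ simply makes Lemma \ref{Hmidi} (which demands a Lipschitz drift, uniformly available for $v_n$ but only through $\nu$-dependent parabolic smoothing for $v_{\nu}$) and the $O(n)$ bookkeeping automatic. Finally, you run Gronwall directly on $X(t)=\|(Id-S_{-n})(v_{\nu}-v_n)(t)\|_{L^{\infty}}$ via an $\ell^1$-weighted sum, whereas the estimates naturally close in the sup-type norm $\|\bar{\omega}_n\|_{\dot{B}^{-1}_{\infty,\infty}}\sim\|\bar{v}_n\|_{\dot{B}^{0}_{\infty,\infty}}$; the paper closes there (Lemma \ref{key}), converts to $L^{\infty}$ at the cost of a factor $n$ ((\ref{zoey}), (\ref{log})), and kills that factor by slightly lowering the arbitrary $\alpha$, since $n2^{-n\alpha'}\leq 2^{-n\alpha}$ for $\alpha<\alpha'$ and $n$ large --- a step your write-up should make explicit, as it is exactly how the clean bound (\ref{zozozo}) emerges.
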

\begin{remark}
In the proof of Proposition \ref{technical}, we let $\nu=2^{-2n}$ as in Remark \ref{killnu}.  Therefore, the dependence of the right hand side of ($\ref{zozozo}$) on $\nu$ is hidden in its dependence on $n$. 
\end{remark}
\begin{proof}
We begin with the series of inequalities
\begin{equation}\label{zoey}
\begin{split}
&||(Id-S_{-n})(v_{\nu}-v_n)(t)||_{L^{\infty}} \leq ||(Id-S_{n})(v_{\nu}-v_n)(t)||_{L^{\infty}}\\
&\qquad\qquad +||(S_n-S_{-n})(v_{\nu}-v_n)(t)||_{L^{\infty}}\\
&\qquad\qquad\leq C2^{-n}||\omega^0||_{L^{\infty}} + Cn||(v_{\nu}-v_n)(t)||_{{\dot{B}}^0_{\infty,\infty}}.\\
\end{split}
\end{equation}
The second inequality follows from an application of Bernstein's Lemma, Lemma \ref{CZhighfreq}, and the uniform bound on the vorticity given in ($\ref{NSvortbound}$).  

To bound $||(v_{\nu}-v_n)(t)||_{{\dot{B}}^0_{\infty,\infty}}$, we need the following lemma, whose proof we postpone until the next section.
\begin{lem}\label{key}
Let $v_{\nu}$ and $v$ be solutions to ($NS$) and ($E$), respectively, satisfying the properties of Theorem \ref{main}.  Then there exist constants $C$ and $C_1$, depending only on the initial data, such that the following estimate holds for any fixed $\alpha\in(0,1)$:
\begin{equation*}
\begin{split}
&||(v_{\nu}-v_n)(t)||_{{\dot{B}}^{0}_{\infty,\infty}} \leq C(t+1)e^{C_1t}2^{-n\alpha}\\
&\qquad\qquad + \int_0^t{Cne^{C_1s}||(v_{\nu}-v_n)(s)||_{{\dot{B}}^0_{\infty,\infty}}}ds.
\end{split}
\end{equation*}
\end{lem}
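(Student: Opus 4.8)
The plan is to pass to the vorticity formulation, where the nonlinearity becomes a transport term and the $L^\infty$-norm is propagated by the divergence-free flow. Writing $\eta = \omega_\nu - \omega_n$ with $\omega_n = S_n\omega(v)$, I would first record that, since $v_\nu - v_n$ is divergence free with vorticity $\eta$, Lemma \ref{CZhighfreq} together with the lower Bernstein bound in (\ref{bern2}) gives $\|\dot\Delta_j(v_\nu - v_n)\|_{L^\infty}\leq C 2^{-j}\|\dot\Delta_j\eta\|_{L^\infty}$ for every $j$, so that $\|(v_\nu - v_n)(t)\|_{\dot{B}^0_{\infty,\infty}}\leq C\|\eta(t)\|_{\dot{B}^{-1}_{\infty,\infty}}$; thus it suffices to prove the stated inequality with $\eta$ in $\dot{B}^{-1}_{\infty,\infty}$ on the left. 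Subtracting $S_n$ applied to the Euler vorticity equation from the Navier--Stokes vorticity equation and using $v_\nu\cdot\nabla\omega_\nu = v_\nu\cdot\nabla\eta + v_\nu\cdot\nabla\omega_n$, I obtain the transport equation $\partial_t\eta + v_\nu\cdot\nabla\eta = \nu\Delta\omega_\nu + [S_n, v\cdot\nabla]\omega + (v - v_\nu)\cdot\nabla\omega_n$, where $[S_n, v\cdot\nabla]\omega = S_n(v\cdot\nabla\omega) - v\cdot\nabla(S_n\omega) = \text{div}\,[S_n, v]\omega$ is the commutator of the truncation with the drift.

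Next I would localize: apply $\dot\Delta_j$ to this equation, write $\dot\Delta_j(v_\nu\cdot\nabla\eta) = v_\nu\cdot\nabla\dot\Delta_j\eta - [v_\nu\cdot\nabla,\dot\Delta_j]\eta$, and integrate the resulting transport equation for $\dot\Delta_j\eta$ along the (divergence-free, hence $L^\infty$-preserving) flow of $v_\nu$. This yields, for each $j$, $\|\dot\Delta_j\eta(t)\|_{L^\infty}\leq\|\dot\Delta_j\eta(0)\|_{L^\infty} + \int_0^t\big(\|[v_\nu\cdot\nabla,\dot\Delta_j]\eta\|_{L^\infty} + \nu\|\Delta\dot\Delta_j\omega_\nu\|_{L^\infty} + \|\dot\Delta_j[S_n,v\cdot\nabla]\omega\|_{L^\infty} + \|\dot\Delta_j((v-v_\nu)\cdot\nabla\omega_n)\|_{L^\infty}\big)\,ds$. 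I would then multiply by $2^{-j}$ and take the supremum over $j$. The initial term is controlled using $\eta(0) = (Id - S_n)\omega^0$ and the bound (\ref{NSvortbound}), giving $\lesssim 2^{-n}\|\omega^0\|_{L^\infty}$. For the high frequencies $j\geq n$ the truncation gives $\dot\Delta_j v_n = 0$, so $\dot\Delta_j(v_\nu - v_n) = \dot\Delta_j v_\nu$ is estimated directly by Lemma \ref{CZhighfreq} and (\ref{NSvortbound}); the transport argument is only needed for $j < n$, where, setting $\nu = 2^{-2n}$ as in Remark \ref{killnu}, Bernstein's Lemma bounds the viscous term by $C\nu 2^{j}\|\omega^0\|_{L^\infty}\leq C 2^{-n}\|\omega^0\|_{L^\infty}$.

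The remaining terms I would treat with Bony's paraproduct decomposition (Definition \ref{para}). The truncation commutator $[S_n, v\cdot\nabla]\omega = \text{div}\,[S_n, v]\omega$ is estimated by exploiting that $v$ is log-Lipschitz with seminorm $\lesssim\|\omega\|_{L^\infty}$ (a consequence of $\omega\in L^\infty$ and Lemma \ref{C1}); since the kernel of $S_n$ concentrates at scale $2^{-n}$, this produces a factor $n 2^{-n}\|\omega^0\|^2_{L^\infty}$, which is $\lesssim 2^{-n\alpha}$ for any $\alpha<1$. In the term $(v - v_\nu)\cdot\nabla\omega_n$ I would split $v - v_\nu = (Id - S_n)v - (v_\nu - v_n)$: the high-frequency piece $(Id - S_n)v\cdot\nabla\omega_n$ is a genuine source contributing $\lesssim 2^{-n}\|\omega^0\|^2_{L^\infty}$ (again via Biot--Savart and Bernstein), while the piece $-(v_\nu - v_n)\cdot\nabla\omega_n$ is linear in the quantity being estimated. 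Applying the paraproduct to this last piece and to the commutator $[v_\nu\cdot\nabla,\dot\Delta_j]\eta$ produces the Gronwall term $\int_0^t Cn e^{C_1 s}\|(v_\nu - v_n)(s)\|_{\dot{B}^0_{\infty,\infty}}\,ds$, the factors $\|v(s)\|_{L^\infty}$ and $\|v_\nu(s)\|_{L^\infty}$ being absorbed into $e^{C_1 s}$ via (\ref{NSunifbd}) and (\ref{Eunifbd}).

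The main obstacle is producing exactly the logarithmic factor $n$ --- rather than the crude $2^n$ that a naive Bernstein estimate of $\nabla\omega_n$ would give --- on the terms linear in $v_\nu - v_n$. This is where the endpoint nature of the $\dot{B}^0_{\infty,\infty}$ (equivalently $\dot{B}^{-1}_{\infty,\infty}$ for the vorticity) estimate bites: the factor $n$ must be extracted from summing the $\sim n$ dyadic blocks below frequency $2^n$ in the low-frequency factors $S_{k-1}$ of the paraproduct $T_{v_\nu - v_n}\omega_n$, equivalently from the log-Lipschitz modulus of continuity of the velocity evaluated at the scale $2^{-n}$. Care is also needed because the homogeneous low-frequency factor $S_{k-1}(v_\nu - v_n)$ is not controlled by $\|v_\nu - v_n\|_{\dot{B}^0_{\infty,\infty}}$ alone; here I would invoke $S_{k-1}(v_\nu - v_n) = S_{k-1}(v_\nu - v)$ for $k\leq n$ and absorb the very lowest frequencies into the bound already obtained in Proposition \ref{lowfreq}, which contributes only a further $O(e^{C_1 s}2^{-n\alpha})$.
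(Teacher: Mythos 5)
Your proposal is correct in outline and shares the paper's skeleton: vorticity formulation, frequency localization, an $L^\infty$ maximum principle along a divergence-free flow, Bony's decomposition for the commutator, the logarithmic factor $n$ extracted from the $\sim n$ dyadic blocks below frequency $2^n$, the factor $2^{-n\alpha}$ from the H\"older regularity of Lemma \ref{C1} evaluated at scale $2^{-n}$, and the use of Proposition \ref{lowfreq} with $\nu=2^{-2n}$ to control the lowest frequencies of $v_\nu-v_n$, exactly as in the paper's estimate (\ref{log}). You deviate in three genuine ways. First, the paper transports along the truncated field $v_n$, not along $v_\nu$: it writes the mollified Euler equation in the Constantin--E--Titi flux form (\ref{1}) with $\tau_n(v,\omega)=r_n(v,\omega)-(v-v_n)(\omega-\omega_n)$ and subtracts it from (\ref{NS}) to get (\ref{zozo}) with drift $v_n$. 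Your simpler truncation commutator $[S_n,v\cdot\nabla]\omega=\mathrm{div}\,[S_n,v]\omega$ is an adequate substitute for $\tau_n$ here, since only an $O(2^{-n\alpha})$ size bound is needed, not energy-flux cancellation. Second, the paper keeps the dissipation $-\nu\Delta\dot{\Delta}_j\bar{\omega}_n$ on the left and invokes Hmidi's transport-diffusion estimate (Lemma \ref{Hmidi}), so that only $\nu\Delta\dot{\Delta}_j\omega_n$ appears as a source; your variant (pure transport, all of $\nu\Delta\omega_\nu$ as a source, the equation used only for $j<n$, with $j\geq n$ handled trivially via $2^{-j}\|\dot{\Delta}_j\eta\|_{L^\infty}\leq C2^{-n}\|\omega^0\|_{L^\infty}$) also works and is arguably more elementary. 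Third, and this is where the concern lies, you replace the paper's commutator $[\dot{\Delta}_j,v_n\cdot\nabla]\bar{\omega}_n$ by $[\dot{\Delta}_j,v_\nu\cdot\nabla]\eta$.

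That last substitution is the one step where your route requires an idea you do not state. The paper's Lemma \ref{comm} is proved with the drift $v_n$ precisely because the frequency truncation caps every dyadic sum at $n$: for instance, the remainder piece of the Bony decomposition is bounded there by $C\|\bar{\omega}_n\|_{\dot{B}^{-1}_{\infty,\infty}}\sum_l\sum_i\|\Delta_{l-i}\nabla v_n\|_{L^\infty}$, a sum with only about $n$ nonzero terms, each of size $\|\omega^0\|_{L^\infty}$ by Lemma \ref{CZhighfreq} and (\ref{NSvortbound}). With the untruncated drift $v_\nu$ the corresponding sum $\sum_l\|\Delta_l\nabla v_\nu\|_{L^\infty}$ is genuinely divergent, and your restriction to $j<n$ does not cap it, because the remainder couples $\dot{\Delta}_j$ to \emph{all} frequencies $l\gtrsim j$ of $\eta$, not only those below $2^n$. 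The estimate can be rescued: split the sum at $l=n$ and use, for $l\geq n$, the trivial high-frequency bound $2^{-l}\|\Delta_l\eta\|_{L^\infty}\leq C2^{-n}\|\omega^0\|_{L^\infty}$ (the same observation you already exploit for $j\geq n$), leaving at most $n$ terms of size $\|\eta\|_{\dot{B}^{-1}_{\infty,\infty}}$ below; the paraproduct piece $[\dot{\Delta}_j,T_{v_\nu}\partial_m]\eta$ is unproblematic because there $j'\leq j+4<n+4$ and $\|S_{j'-1}\nabla v_\nu\|_{L^\infty}\leq C(\|v_\nu\|_{L^\infty}+j'\|\omega^0\|_{L^\infty})\leq Cne^{C_1s}$, mirroring (\ref{gradbound1})--(\ref{gradbound}). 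So nothing in your plan is wrong, but the endpoint commutator estimate with drift $v_\nu$ --- the technical heart of the lemma, to which the paper devotes its entire appendix --- is asserted in one sentence, and as written it conceals exactly the point where the paper's choice of the truncated drift $v_n$ is doing essential work.
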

Assuming this lemma holds, we can apply Gronwall's Lemma and integrate in time to conclude that 
\begin{equation*}
||(v_{\nu}-v_n)(t)||_{{\dot{B}}^{0}_{\infty,\infty}} \leq C(t+1)e^{C_1t}2^{-n\alpha}e^{Cn(e^{C_1t}-1)}.
\end{equation*}
Since $\alpha\in(0,1)$ is arbitrary, we can write
\begin{equation*}
n||(v_{\nu}-v_n)(t)||_{{\dot{B}}^{0}_{\infty,\infty}} \leq C(t+1)e^{C_1t}2^{-n\alpha}e^{Cn(e^{C_1t}-1)}
\end{equation*}
for sufficiently large $n$.  Combining this estimate with the estimate given in ($\ref{zoey}$) yields Proposition \ref{technical}.  
\end{proof}
To complete the proof of Theorem \ref{main}, we combine ($\ref{3terms}$), Remark \ref{killnu}, Proposition \ref{technical}, and ($\ref{term3}$) to get the following estimate for large $n$:
\begin{equation*}
||(v_{\nu}-v)(t)||_{L^{\infty}} \leq C(t+1)e^{C_1t}2^{-n\alpha}e^{Cn(e^{C_1t}-1)}. 
\end{equation*}
Using the equality $n=-\frac{1}{2}\log_2 {\nu}$, we obtain ($\ref{thebigone}$).  This completes the proof of Theorem \ref{main}.  We devote the next section to the proof of Lemma \ref{key}.  
\end{proof}
\section{Proof of Lemma \ref{key}}
We begin with some notation.  For the proof of Lemma \ref{key}, we let ${\bar{\omega}}_n=\omega_{\nu}-\omega_n$ and ${\bar{v}}_n=v_{\nu}-v_n$.

To prove the lemma, we localize the frequencies of the vorticity formulations of ($E$) and ($NS$), and we consider the difference of the two resulting equations.  After localizing the frequency of the vorticity formulation of ($E$), we see that ${\dot{\Delta}}_j\omega_n$ satisfies the following equation: 
\begin{equation}\label{1}
\partial_t{\dot{\Delta}}_j\omega_n + v_n\cdot\nabla{\dot{\Delta}}_j\omega_n + [{\dot{\Delta}}_j, v_n\cdot\nabla]\omega_n= \nabla\cdot {\dot{\Delta}}_j{\tau}_n(v,\omega),
\end{equation} 
where
\begin{equation*}
\tau_n(v,\omega)= r_n(v,\omega) - (v-v_n)(\omega-\omega_n)
\end{equation*}
and
\begin{equation*}
r_n(v,\omega) = \int{\check{\psi}(y)(v(x-2^{-n}y)-v(x))(\omega(x-2^{-n}y)-\omega(x))}dy.
\end{equation*}  
This equation is utilized by Constantin and Wu in \cite{CW} and by Constantin, E, and Titi in a proof of Onsager's conjecture in \cite{CET}.    

If $v_{\nu}$ is a solution to ($NS$), we can localize the frequency of the vorticity formulation of ($NS$) to see that ${\dot{\Delta}}_j\omega_{\nu}$ satisfies
\begin{equation}\label{NS}
\partial_t {\dot{\Delta}}_j\omega_{\nu} + v_{\nu}\cdot\nabla{\dot{\Delta}}_j\omega_{\nu} + [{\dot{\Delta}}_j, v_{\nu}\cdot\nabla]\omega_{\nu} = \nu\Delta{\dot{\Delta}}_j\omega_{\nu}.
\end{equation} 
We subtract ($\ref{1}$) from ($\ref{NS}$).  This yields
\Obsolete{\begin{equation*}
\begin{split}
&\partial_t{\dot{\Delta}}_j{\bar{\omega}}_n + v_n\cdot\nabla{\dot{\Delta}}_j{\bar{\omega}}_n + {\dot{\Delta}}_j({\bar{v}}_n\cdot\nabla {\omega}_{\nu}) -\nu\Delta {\dot{\Delta}}_j {\bar{\omega}}_n -\nu\Delta{{\dot{\Delta}}_j\omega_n} \\
&\qquad + \nabla\cdot {\dot{\Delta}}_j{\tau}_n(v,\omega)+ [{\dot{\Delta}}_j, v_n\cdot\nabla]{\bar{\omega}}_n=0,
\end{split}
\end{equation*}
which gives}
\begin{equation}\label{zozo}
\begin{split}
&\partial_t{\dot{\Delta}}_j{\bar{\omega}}_n + v_n\cdot\nabla{\dot{\Delta}}_j{\bar{\omega}}_n -\nu\Delta {\dot{\Delta}}_j{\bar{\omega}}_n = -{\dot{\Delta}}_j({\bar{v}}_n\cdot\nabla {\omega}_{\nu}) \\
&\qquad + \nu\Delta{{\dot{\Delta}}_j\omega_n} - \nabla\cdot {\dot{\Delta}}_j{\tau}_n(v,\omega) - [{\dot{\Delta}}_j, v_n\cdot\nabla]{\bar{\omega}}_n.
\end{split}
\end{equation}
We observe that $v_n$ is a divergence-free Lipschitz vector field and apply the following lemma, which is proved in \cite{H}. 
\begin{lem}\label{Hmidi}
Let $p\in[1,\infty]$, and let $u$ be a divergence-free vector field belonging to $L^1_{loc}(\R^+; Lip(\R^d))$.  Moreover, assume the function $f$ belongs to $L^1_{loc}(\R^+; L^p(\R^d))$ and the function $a^0$ belongs to $L^p(\R^d)$.  Then any solution $a$ to the problem
\begin{align*}
    \begin{matrix}
         & \left\{
            \begin{matrix}
                \partial_t a + u \cdot \nabla a - \nu \Delta
                 a = f, \\
                a|_{t = 0} = a^0
            \end{matrix}
            \right.
    \end{matrix}
\end{align*}
satisfies the following estimate:
\begin{equation*}
||a(t)||_{L^p}\leq ||a^0||_{L^p} + \int_0^t{||f(s)||_{L^p}}ds.
\end{equation*} 
\end{lem}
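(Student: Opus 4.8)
The natural route is the classical $L^p$ energy estimate, exploiting that $u$ is divergence free (so the transport term is skew-symmetric and drops out) and that the Laplacian is dissipative (so the viscous term carries a favorable sign and may simply be discarded). The estimate will come out with constant $1$ and with no Gronwall factor, which is why the bound is additive rather than exponential.

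First I would treat the range $1<p<\infty$. Multiplying the equation $\partial_t a + u\cdot\nabla a - \nu\Delta a = f$ by $|a|^{p-2}a$ and integrating over $\R^d$, the time term produces $\frac1p\frac{d}{dt}||a(t)||_{L^p}^p$. For the transport term, writing $|a|^{p-2}a\,(u\cdot\nabla a) = \frac1p\,u\cdot\nabla(|a|^p)$ and integrating by parts gives $-\frac1p\int (\text{div}\,u)\,|a|^p\,dx = 0$, since $\text{div}\,u=0$. For the viscous term, integration by parts together with the identity $\nabla(|a|^{p-2}a) = (p-1)|a|^{p-2}\nabla a$ yields $\nu(p-1)\int |a|^{p-2}|\nabla a|^2\,dx \ge 0$; because this has the correct sign I would keep it on the left and simply drop it. What remains is $\frac1p\frac{d}{dt}||a||_{L^p}^p \le \int |a|^{p-1}|f|\,dx \le ||a||_{L^p}^{p-1}||f||_{L^p}$ by Hölder's inequality. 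Dividing by $||a||_{L^p}^{p-1}$ (on the set where it is nonzero) converts the left side into $||a||_{L^p}^{p-1}\frac{d}{dt}||a||_{L^p}$, so that $\frac{d}{dt}||a(t)||_{L^p}\le ||f(t)||_{L^p}$; integrating in time from $0$ to $t$ gives exactly the claimed bound.

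The endpoint cases require separate handling. For $p=\infty$ the weight $|a|^{p-2}$ is unavailable, so I would instead pass to the limit $p\to\infty$ in the estimate just obtained—the constant being uniformly equal to $1$—or, equivalently, invoke the maximum principle for the transport-diffusion operator $\partial_t + u\cdot\nabla-\nu\Delta$, which is the same principle underlying the vorticity bound (\ref{NSvortbound}). For $p=1$ the weight $|a|^{p-2}=|a|^{-1}$ is singular; here I would replace $|a|$ by the smooth convex approximant $\sqrt{a^2+\varepsilon^2}$, repeat the computation (convexity keeps the viscous contribution nonnegative), and let $\varepsilon\to0$.

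The main obstacle is rigor rather than ingenuity: the formal computation differentiates $||a(t)||_{L^p}^p$ in time and integrates by parts in space, which presupposes more regularity and integrability than the hypotheses $u\in L^1_{loc}(\R^+;Lip)$, $f\in L^1_{loc}(\R^+;L^p)$, $a^0\in L^p$ literally supply. I would make this precise through a standard mollification and renormalization argument: regularize the data (and, if needed, $u$), carry out the identities above for the smooth approximations where every step is justified, and then pass to the limit using the fact that all constants are independent of the regularization parameter. Since the estimate is already recorded in \cite{H}, I would defer the technical limiting details to that reference and present the energy computation as the conceptual heart of the argument.
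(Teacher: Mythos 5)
Your proposal is correct: the paper itself gives no proof of this lemma, deferring entirely to \cite{H}, and your $L^p$ energy estimate---killing the transport term via $\mathrm{div}\,u=0$, discarding the nonnegative dissipative term $\nu(p-1)\int |a|^{p-2}|\nabla a|^2\,dx$, applying H\"older, and handling $p=\infty$ by the maximum principle (or $p\to\infty$) and $p=1$ by the regularized weight $\sqrt{a^2+\varepsilon^2}$---is precisely the standard argument found in that reference. Your closing remark that the hypotheses ($u\in L^1_{loc}(\R^+;Lip)$, $f\in L^1_{loc}(\R^+;L^p)$) require a mollification/renormalization step to justify the formal computation is the right caveat, and deferring those details to \cite{H} is consistent with how the paper itself treats the lemma.
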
 
An application of Lemma \ref{Hmidi} to ($\ref{zozo}$) yields
\begin{equation*}
\begin{split}
&||{\dot{\Delta}}_j{\bar{\omega}}_n(t)||_{L^{\infty}} \leq ||{\dot{\Delta}}_j{{\bar{\omega}}_n}^0||_{L^{\infty}} + C\int_0^t  {||(-{\dot{\Delta}}_j({\bar{v}}_n\cdot\nabla {\omega}_{\nu}) + \nu\Delta{{\dot{\Delta}}_j\omega_n} }\\
 &\qquad -{\nabla\cdot {\dot{\Delta}}_j{\tau}_n(v,\omega) - [{\dot{\Delta}}_j, v_n\cdot\nabla]{\bar{\omega}}_n)(s)||_{L^{\infty}}}ds.
\end{split}
\end{equation*}   
Multiplying through by $2^{-j}$ and taking the supremum over $j\in\Z$, we obtain the inequality
\begin{equation}\label{terms}
\begin{split}
&||{\bar{\omega}}_n(t)||_{{\dot{B}}^{-1}_{\infty,\infty}} \leq ||{{\bar{\omega}}_n}^0||_{{\dot{B}}^{-1}_{\infty,\infty}}+ C\int_0^t\{{||{\bar{v}}_n{\omega}_{\nu}(s)||_{{\dot{B}}^{0}_{\infty,\infty}} + ||\nu\nabla\omega_n(s)||_{{\dot{B}}^{0}_{\infty,\infty}}}\\
&\qquad { + ||{\tau}_n(v,\omega)(s)||_{{\dot{B}}^{0}_{\infty,\infty}} + \sup_{j\in\Z}2^{-j}||[{\dot{\Delta}}_j, v_n\cdot\nabla]{\bar{\omega}}_n(s)||_{L^{\infty}}}\}ds,
\end{split}
\end{equation}
where we repeatedly used the divergence-free condition on $v$ and Lemma \ref{homoprop}. 

To complete the proof of Lemma \ref{key}, we estimate each term on the right hand side of ($\ref{terms}$).  We begin by estimating $||{{\bar{\omega}}_n}^0||_{{\dot{B}}^{-1}_{\infty,\infty}}$.  Using the definition of ${{\bar{\omega}}_n}$ as well as the definition of the Besov space ${\dot{B}}^{-1}_{\infty,\infty}$, we see that
\begin{equation*}
\begin{split}
&||{{\bar{\omega}}_n}^0||_{{\dot{B}}^{-1}_{\infty,\infty}}
\leq {\sup_{j\geq n}2^{-j}||{\dot{\Delta}}_j(\omega^0-S_n\omega^0)||_{L^{\infty}}}  
\leq C2^{-n}||\omega^0||_{L^{\infty}}.
\end{split}
\end{equation*}
To bound $||{\bar{v}}_n{\omega}_{\nu}(s)||_{{\dot{B}}^{0}_{\infty,\infty}}$,  we observe that $L^{\infty}$ is continuously embedded in ${\dot{B}}^0_{\infty,\infty}$ and that the $L^{\infty}$-norm of vorticity is uniformly bounded in time by ($\ref{NSvortbound}$) to write 
\begin{equation}\label{badterm}
\begin{split}
&||{\bar{v}}_n{\omega}_{\nu}(s)||_{{\dot{B}}^0_{\infty,\infty}} \leq C||{\bar{v}}_n(s)||_{L^{\infty}}||{\omega}^0||_{L^{\infty}}.
\end{split}
\end{equation}
To estimate the $L^{\infty}$-norm of ${\bar{v}}_n(s)$, we use Remark \ref{killnu}, Bernstein's Lemma, ($\ref{NSvortbound}$), and the definition of the ${\dot {B}}^0_{\infty,\infty}$-norm to write
\begin{equation}\label{log}
\begin{split}
&||{\bar{v}}_n(s)||_{L^{\infty}} 
\leq ||S_{-n}(v_{\nu}-v)(s)||_{L^{\infty}}+||S_{-n}(v-v_n)(s)||_{L^{\infty}}\\
&\qquad +||(S_n-S_{-n})(v_{\nu}-v_n)(s)||_{L^{\infty}}+||(Id-S_n)(v_{\nu}-v_n)(s)||_{L^{\infty}}\\ 
&\qquad\leq Ce^{C_1s}2^{-n\alpha}+Cn||(v_{\nu}-v_n)(s)||_{{\dot B}^0_{\infty,\infty}}\\ 
\end{split}
\end{equation}
for fixed $\alpha\in (0,1)$.
Combining ($\ref{badterm}$) and ($\ref{log}$) gives 
\begin{equation}\label{Besov}
\begin{split}
&||{\bar{v}}_n {\omega}_{\nu}(s)||_{{\dot{B}}^{0}_{\infty,\infty}} \leq Ce^{C_1s}2^{-n\alpha}+Cn||(v_{\nu}-v_n)(s)||_{{\dot B}^0_{\infty,\infty}}.\\
\end{split}
\end{equation}
To bound $\nu||\nabla{\omega_n}(s)||_{{\dot{B}}^{0}_{\infty,\infty}}$, we again use Bernstein's Lemma, the definition of $\omega_n$, and ($\ref{NSvortbound}$) to conclude that
\begin{equation*}
\begin{split}
&\nu||\nabla{\omega_n}(s)||_{{\dot{B}}^{0}_{\infty,\infty}} 
\leq C\nu 2^{n} ||{\omega}^0||_{L^{\infty}}.
\end{split}
\end{equation*} 
If we let $\nu=2^{-2n}$ as in Remark \ref{killnu}, we obtain the inequality
\begin{equation*}
\nu||\nabla{\omega_n}(s)||_{{\dot{B}}^{0}_{\infty,\infty}}\leq C2^{-n} ||{\omega}^0||_{L^{\infty}}.
\end{equation*}  
Finally, we estimate $||{\tau}_n(v,\omega)(s)||_{{\dot{B}}^{0}_{\infty,\infty}}$.  We begin by estimating $||(v-v_n)(\omega-\omega_n)(s)||_{{\dot{B}}^{0}_{\infty,\infty}}$.  We again use the embedding $L^{\infty}\hookrightarrow {\dot{B}}^0_{\infty,\infty}$, Bernstein's Lemma, and ($\ref{NSvortbound}$) to write
\begin{equation}\label{nonterm1}
\begin{split}
&||(v-v_n)(\omega-\omega_n)(s)||_{{\dot{B}}^{0}_{\infty,\infty}}
\leq ||(v-v_n)(\omega-\omega_n)(s)||_{L^{\infty}}\\
&\qquad\qquad\leq C||\omega^0||_{L^{\infty}}2^{-n}||\omega^0||_{L^{\infty}}.
\end{split}
\end{equation}
In order to bound $||r_n(v,\omega)(s)||_{{\dot{B}}^{0}_{\infty,\infty}}$, we use the membership of $v$ to $C^{\alpha}(\R^2)$ for any $\alpha\in(0,1)$ to write
\begin{equation}\label{useholder}
|v(s,x-2^{-n}y)-v(s,x)| \leq C2^{-n\alpha}|y|^{\alpha}||v(s)||_{C^{\alpha}}.
\end{equation}
Since ${|y|}^{\alpha}|\check{\psi}(y)|$ is integrable, we can apply ($\ref{useholder}$) and Holder's inequality to conclude that
\begin{equation}\label{nonterm2}
\begin{split}
&||r_n(v,\omega)(s)||_{{\dot{B}}^{0}_{\infty,\infty}} 
\leq C2^{-n\alpha}||\omega^0||_{L^{\infty}}||v(s)||_{C^{\alpha}}\leq Ce^{C_1s}2^{-n\alpha},
\end{split}
\end{equation}  
where we used Lemma \ref{C1} to get the last inequality.  Here the constants $C$ and $C_1$ depend only on $||v^0||_{L^{\infty}}$ and $||\omega^0||_{L^{\infty}}$.  Combining ($\ref{nonterm1}$) and ($\ref{nonterm2}$) yields
\begin{equation*}
\begin{split}
||{\tau}_n(v,\omega)(s)||_{{\dot{B}}^{0}_{\infty,\infty}} & \leq Ce^{C_1s}2^{-n\alpha}.
\end{split}
\end{equation*}

It remains to bound the commutator term.  We use the following lemma, which we prove in the appendix.
\begin{lemma}\label{comm}
Let $v$ be a solution to ($E$) with vorticity $\omega=\omega(v)$, and assume $v$ and $\omega$ satisfy the conditions of Theorem \ref{main}.  Then the following commutator estimate holds:\\
\begin{equation*}
{\sup_{j\in\Z}2^{-j}||[{\dot{\Delta}}_j, v_n\cdot\nabla]{\bar{\omega}}_n(s)||_{L^{\infty}}}\leq Ce^{C_1s}2^{-n\alpha}+Ce^{C_1s}n||{\bar{\omega}}_n(s)||_{{\dot B}^{-1}_{\infty,\infty}},
\end{equation*}
where $C$ and $C_1$ depend only on $||v^0||_{L^{\infty}}$ and $||\omega^0||_{L^{\infty}}$.
\end{lemma}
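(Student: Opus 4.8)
The plan is to reduce the commutator to a sum of Littlewood--Paley pieces through Bony's decomposition and to estimate each piece according to the dyadic block $j$ it carries. Since $v_n$ is divergence free, I would first rewrite
$$[\dot{\Delta}_j, v_n\cdot\nabla]\bar{\omega}_n = \nabla\cdot\big(\dot{\Delta}_j(v_n\bar{\omega}_n) - v_n\dot{\Delta}_j\bar{\omega}_n\big),$$
so that the derivative sits outside a commutator of $\dot{\Delta}_j$ with multiplication by $v_n$. Applying $v_n\bar{\omega}_n = T_{v_n}\bar{\omega}_n + T_{\bar{\omega}_n}v_n + R(v_n,\bar{\omega}_n)$ then splits the expression into the paraproduct commutator $[\dot{\Delta}_j, T_{v_n}]\bar{\omega}_n$, the ``reverse'' paraproduct pieces built from $T_{\bar{\omega}_n}v_n$, and the remainder pieces built from $R(v_n,\bar{\omega}_n)$. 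The structural fact I would exploit throughout is that each such $\dot{\Delta}_j$-commutator has a kernel representation $\int\check{\varphi}_j(z)\big(v_n(x-z)-v_n(x)\big)\,g(x-z)\,dz$ for the appropriate companion factor $g$, so that the velocity increment $v_n(x-z)-v_n(x)$ may be bounded either by $\|\nabla v_n\|_{L^\infty}|z|$ or by $\|v_n\|_{C^\alpha}|z|^\alpha$, whichever is advantageous at the frequency under consideration.

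For the main term I would treat the paraproduct commutator at frequencies $j<n$. There the increment is controlled by the Lipschitz norm, and since $\check{\varphi}_j$ has first moment $O(2^{-j})$ this gains a factor $2^{-j}$ which, after the outer derivative (a factor $2^j$ by Bernstein, Lemma \ref{bernstein}) and the external weight $2^{-j}$, reproduces the homogeneous norm $\|\bar{\omega}_n\|_{\dot{B}^{-1}_{\infty,\infty}}$. The Lipschitz norm is where the logarithm enters: writing $\nabla v_n=\nabla S_n v$ and using Lemma \ref{CZhighfreq} together with Bernstein and the uniform vorticity bound (\ref{NSvortbound}) gives $\|\nabla v_n\|_{L^\infty}\lesssim \sum_{0\le i<n}\|\dot{\Delta}_i\omega\|_{L^\infty} + C\|v\|_{L^\infty}\lesssim Cn\|\omega^0\|_{L^\infty} + Ce^{C_1 s}$, the low-frequency contribution being controlled through (\ref{Eunifbd}). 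This produces the main contribution $Cne^{C_1 s}\|\bar{\omega}_n\|_{\dot{B}^{-1}_{\infty,\infty}}$.

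The error term $2^{-n\alpha}$ I expect to extract from the high frequencies $j\ge n$, where the Lipschitz bound would cost an unacceptable factor. Instead I would bound the increment by the Hölder seminorm, using $\|v_n\|_{C^\alpha}\lesssim\|v\|_{C^\alpha}\lesssim Ce^{C_1 s}$ from Lemma \ref{C1}; the moment $\int|\check{\varphi}_j(z)||z|^\alpha\,dz\lesssim 2^{-j\alpha}\le 2^{-n\alpha}$ then supplies the gain, and pairing it with the crude uniform bound $\|\dot{\Delta}_{j'}\bar{\omega}_n\|_{L^\infty}\lesssim\|\omega^0\|_{L^\infty}$ coming from (\ref{NSvortbound}) yields $Ce^{C_1 s}2^{-n\alpha}$. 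The reverse-paraproduct and remainder pieces are automatically confined to $j\lesssim n$, because $v_n=S_n v$ is spectrally supported in $\{|\xi|\lesssim 2^n\}$; these I would estimate by Bernstein, Lemma \ref{CZhighfreq}, and (\ref{NSvortbound}), using $\|\dot{\Delta}_i v_n\|_{L^\infty}\lesssim 2^{-i}\|\omega^0\|_{L^\infty}$ to make the dyadic sums telescope, so that they feed into the $\|\bar{\omega}_n\|_{\dot{B}^{-1}_{\infty,\infty}}$ term, with the Hölder-increment structure again contributing a $2^{-n\alpha}$ piece exactly as in the estimate of $r_n$.

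The main obstacle is that this estimate lives at the endpoint regularity $\dot{B}^{-1}_{\infty,\infty}$, precisely where the classical transport-commutator estimate just fails; the spectral truncation is what renders $\|\nabla v_n\|_{L^\infty}$ finite, at the cost of the logarithmic factor $n$, and the genuine difficulty is to organize the dyadic summations so that they converge uniformly over all $j\in\Z$. In particular the $j\to-\infty$ tail of the remainder forces me to switch from the vorticity bound to the velocity bound (\ref{Eunifbd}) on the very low frequencies of $v_n$ in order to sum a convergent geometric series, while the range $j\ge n$ must be handled by the Hölder trade-off rather than the Lipschitz bound so as not to reintroduce an uncontrolled power of $n$; verifying that this trade-off delivers exactly $2^{-n\alpha}$, and not a weaker power, is the delicate point.
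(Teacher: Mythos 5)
Your proposal is correct in outline and shares the skeleton of the paper's proof: the same Bony splitting of the commutator into a paraproduct commutator, reverse-paraproduct pieces, and remainder pieces; the same kernel representation $\int\check{\varphi}_j(z)\bigl(v_n(x-z)-v_n(x)\bigr)g(x-z)\,dz$ with the increment estimated through $\|\nabla v_n\|_{L^\infty}$; the identical logarithmic bound $\|\nabla v_n(s)\|_{L^\infty}\leq Cne^{C_1s}$ obtained exactly as in (\ref{gradbound1})--(\ref{gradbound}); and the same split $v_n=S_0v_n+(Id-S_0)v_n$ in the remainder, with (\ref{Eunifbd}) absorbing the very low frequencies of $v_n$. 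Where you genuinely diverge is the source of the term $Ce^{C_1s}2^{-n\alpha}$. The paper never uses the H\"older modulus of $v_n$ inside the commutator: for the pieces that resist a pure $\|\bar{\omega}_n\|_{\dot{B}^{-1}_{\infty,\infty}}$ bound (the $j<0$ range of the paraproduct commutator, the term $\dot{\Delta}_j(T_{\partial_m\bar{\omega}_n}v_n)$ as in (\ref{comm3}), and the low-frequency remainder pieces), it reduces to $C\|v\|_{L^\infty}\|\bar{v}_n\|_{L^\infty}$ and invokes the velocity-difference bound (\ref{log}), which rests on Proposition \ref{lowfreq} and the coupling $\nu=2^{-2n}$ of Remark \ref{killnu}; the paper's commutator lemma thus silently imports PDE information from the viscous low-frequency analysis. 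You instead extract $2^{-n\alpha}$ intrinsically, from $\|v_n\|_{C^{\alpha}}\lesssim e^{C_1s}$ (Lemma \ref{C1}) applied at frequencies $j\geq n$, where the moment $\int|\check{\varphi}_j(z)||z|^{\alpha}\,dz\lesssim 2^{-j\alpha}\leq 2^{-n\alpha}$ paired with the crude bound from (\ref{NSvortbound}) closes the estimate; your unified Lipschitz treatment also covers $j<0$ of the paraproduct commutator (there $j'\in\{1,2\}$, so the factor $2^{-2j}2^{j'}$ is harmless) more simply than the paper's separate argument. What this buys is a self-contained, purely harmonic-analytic lemma, decoupled from the choice of viscosity.

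One point you gloss over, and it is precisely the point at which the paper is forced through $\|\bar{v}_n\|_{L^\infty}$: the reverse paraproduct $\dot{\Delta}_jT_{\bar{\omega}_n}v_n$ and the low-frequency remainder pieces carry the factor $S_{l-1}\bar{\omega}_n$, the low frequencies of the \emph{vorticity difference}. Bounding this crudely by $\|\omega^0\|_{L^\infty}$ via (\ref{NSvortbound}) yields an $O(1)$ contribution --- neither $2^{-n\alpha}$ nor proportional to $\|\bar{\omega}_n\|_{\dot{B}^{-1}_{\infty,\infty}}$ --- and an $O(1)$ addition to the integrand in Lemma \ref{key} destroys the Gronwall conclusion, since the resulting bound no longer vanishes as $n\to\infty$. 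The repair within your framework is the geometric summation $\|S_{l-1}\bar{\omega}_n\|_{L^\infty}\leq\sum_{k\leq l-2}2^{k}\|\bar{\omega}_n\|_{\dot{B}^{-1}_{\infty,\infty}}\leq C2^{l}\|\bar{\omega}_n\|_{\dot{B}^{-1}_{\infty,\infty}}$, which requires justifying that $S_{l-1}\bar{\omega}_n$ equals the sum of its homogeneous blocks with no constant discrepancy; this holds because $\bar{\omega}_n$ is the curl of the bounded field $\bar{v}_n$, so $\|S_{-M}\bar{\omega}_n\|_{L^\infty}\lesssim 2^{-M}\|\bar{v}_n\|_{L^\infty}\to 0$. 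With that addition --- which the paper avoids by bounding $\|\Delta_k\bar{\omega}_n\|_{L^\infty}\lesssim 2^{k}\|\Delta_k\bar{v}_n\|_{L^\infty}$ and appealing to (\ref{log}) --- your argument closes and delivers the stated estimate.
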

In order to apply Gronwall's Lemma, we bound $||{\bar{\omega}}_n(s)||_{{\dot{B}}^{-1}_{\infty,\infty}}$ with $||{\bar{v}}_n(s)||_{{\dot{B}}^{0}_{\infty,\infty}}$, 
which yields
\begin{equation*}
\begin{split}
&{\sup_{j\in\Z}2^{-j}||[{\dot{\Delta}}_j, v_n\cdot\nabla]{\bar{\omega}}_n(s)||_{L^{\infty}}} \leq Ce^{C_1s}2^{-n\alpha}+Cne^{C_1s}||{\bar{v}}_n(s)||_{{\dot{B}}^{0}_{\infty,\infty}}.\\
\end{split}
\end{equation*}
Combining all of the above estimates gives, for fixed $\alpha\in(0,1)$,
\begin{equation}\label{3}
\begin{split}
&||{\bar{v}}_n(t)||_{{\dot{B}}^{0}_{\infty,\infty}}\leq C||{\bar{\omega}}_n(t)||_{{\dot{B}}^{-1}_{\infty,\infty}}\\
&\leq Ce^{C_1t}2^{-n\alpha} + \int_0^t{Cne^{C_1s}||{\bar{v}}_n(s)||_{{\dot{B}}^0_{\infty,\infty}}}ds.
\end{split}
\end{equation}
This completes the proof of Lemma \ref{key}.
\Obsolete{We can now apply Gronwall's Lemma to ($\ref{3}$) and integrate in time to conclude that 
\begin{equation*}
||{\bar{v}}_n(t)||_{{\dot{B}}^{0}_{\infty,\infty}} \leq Ce^{C_1t}2^{-n\alpha}e^{n(e^{C_1t}-1)}.
\end{equation*}
Since $\alpha$ is an arbitrary number in $(0,1)$, we can write
\begin{equation*}
n||{\bar{v}}_n(t)||_{{\dot{B}}^{0}_{\infty,\infty}} \leq Ce^{C_1t}2^{-n\alpha}e^{n(e^{C_1t}-1)}
\end{equation*}
for sufficiently large $n$.  Combining this estimate with the estimate given in ($\ref{zoey}$) completes the proof of Proposition \ref{technical}. 
To complete the proof of Theorem \ref{main}, we combine ($\ref{3terms}$), Proposition \ref{lowfreq}, Proposition \ref{technical}, and ($\ref{term3}$) to get the following estimate for large $n$:
\begin{equation*}
||(v_{\nu}-v)(t)||_{L^{\infty}} \leq Ce^{C_1t}2^{-n\alpha}e^{n(e^{C_1t}-1)}. 
\end{equation*}
Passing to the limit as $n$ approaches infinity, we get the desired result for sufficiently small $t$.  This completes the proof of Theorem \ref{main}.}
\section{Appendix}
\Obsolete{We now prove Lemma \ref{Gauss}.  We fix $\delta>0$ and write 
\begin{equation}\label{1000}
\begin{split}
&|e^{t\nu\Delta}u(x)-u(x)| \leq |\int_{|y|\leq\delta}{G_{t\nu}(y)u(x-y)}dy -u(x)|\\
&\qquad\qquad + |\int_{|y|>\delta}{G_{t\nu}(y)u(x-y)}dy|.
\end{split}
\end{equation}
After integrating by parts, we can bound the second term on the right hand side of ($\ref{1000}$) by
\begin{equation}\label{1001}
C||u||_{L^{\infty}}\exp\left(-\frac{\delta^2}{4\nu t}\right). 
\end{equation}
To estimate the first term on the right hand side of ($\ref{1000}$), we use the property that the integral over $\R^2$ of the Gauss kernel is equal to one to write 
\begin{equation*}
\begin{split}
&|\int_{|y|\leq\delta}{ G_{t\nu}(y)u(x-y)}dy -u(x)|\\
& \leq |\int_{|y|\leq\delta}{ G_{t\nu}(y)u(x-y)}dy -\int_{\R^2}{G_{t\nu}(y)u(x)}dy|\\
&\leq \int_{|y|\leq \delta}{G_{t\nu}(y)|u(x-y)-u(x)|}dy + \int_{|y|> \delta}{G_{t\nu}(y)|u(x)|}dy\\
& \leq \sup_{x\in\R^2}\sup_{|y|\leq\delta} |u(x-y)-u(x)| + C||u||_{L^{\infty}}\exp\left(-\frac{\delta^2}{4\nu t}\right),
\end{split}
\end{equation*}
where we utilized the bound given in ($\ref{1001}$) on the second term.  We now use the membership of $u$ to $C^{\alpha}(\R^2)$ for every $\alpha<1$ to bound the first term by ${\delta}^{\alpha}||u||_{C^{\alpha}}$.  This completes the proof.}
We now prove Lemma \ref{comm}.  We need to show that 
\begin{equation*}
{\sup_{j\in\Z}2^{-j}||[{\dot{\Delta}}_j, v_n\cdot\nabla]{\bar{\omega}}_n(s)||_{L^{\infty}}}\leq Ce^{C_1s}2^{-n\alpha}+Ce^{C_1s}n||{\bar{\omega}}_n(s)||_{{\dot B}^{-1}_{\infty,\infty}}.
\end{equation*}
We first use Bony's paraproduct decomposition to write
\begin{equation}\label{Bony}
\begin{split}
&[{\dot{\Delta}}_j, v_n\cdot\nabla]{\bar{\omega}}_n = \sum_{m=1}^2 [{\dot{\Delta}}_j, T_{v_n^m}\partial_m]{\bar{\omega}}_n\\
&\qquad + [{\dot{\Delta}}_j, T_{\partial_m\cdot}v_n^m]{\bar{\omega}}_n + [{\dot{\Delta}}_j, \partial_mR(v_n^m, \cdot)]{\bar{\omega}}_n.
\end{split}
\end{equation} 
To bound the $L^{\infty}$-norm of the first term on the right hand side of ($\ref{Bony}$), we consider the cases $j<0$ and $j\geq0$ separately.  For $j\geq0$, we use the definition of the paraproduct and properties of the partition of unity to establish the following equality:
\begin{equation}\label{C67}
[{\dot{\Delta}}_j, T_{v_n}\partial_m] = \sum_{j'=\max\{1,j-4\}}^{j+4} [{\dot{\Delta}}_j, S_{j'-1}(v_n)] \Delta_{j'}\partial_m.
\end{equation}
We then express the operator ${\dot{\Delta}}_j$ as a convolution with ${\check{\varphi}}_j$, write out the commutator on the right hand side of ($\ref{C67}$), and change variables.  This yields
\begin{equation}\label{stupidterm1}
\begin{split}
&||[{\dot{\Delta}}_j, T_{v_n}\partial_m]{\bar{\omega}}_n||_{L^{\infty}} \leq \sum_{j':|j-j'|\leq 4} ||\int {{\check{\varphi}}(y)(S_{j'-1}v_n(x-2^{-j}y)}\\
&\qquad {-S_{j'-1}v_n(x))\Delta_{j'}\partial_m{\bar{\omega}}_n(x-2^{-j}y)}dy||_{L^{\infty}}\\
&\qquad\leq \sum_{j':|j-j'|\leq 4} 2^{j'-j} ||S_{j'-1}\nabla v_n||_{L^{\infty}}|| \Delta_{j'}{\bar{\omega}}_n||_{L^{\infty}}\int {|\check{\varphi}(y)||y|} dy, \\
\end{split}
\end{equation}
where we used Bernstein's Lemma to get the last inequality.  To complete the argument for this term, we must estimate the growth of $||\nabla v_n||_{L^{\infty}}$ with $n$.  We break $\nabla v_n$ into low and high frequencies and recall the definition of $v_n$ to write
\begin{equation}\label{gradbound1}
||\nabla v_n||_{L^{\infty}} \leq ||\Delta_{-1}\nabla v_n||_{L^{\infty}} + \sum_{k=0}^n||\Delta_k\nabla v_n||_{L^{\infty}}.\\
\end{equation}
For the low frequency term, we bound $||\Delta_{-1}\nabla v_n||_{L^{\infty}}$ with $||v||_{L^{\infty}}$ and apply ($\ref{Eunifbd}$).  For the high frequencies, we apply Lemma \ref{CZhighfreq} and ($\ref{NSvortbound}$).  This yields
\begin{equation}\label{gradbound}
||\nabla v_n(s)||_{L^{\infty}} \leq Cne^{C_1s},
\end{equation}
where $C$ and $C_1$ depend only on $||v^0||_{L^{\infty}}$ and $||\omega^0||_{L^{\infty}}$.  Plugging this bound into ($\ref{stupidterm1}$), multiplying ($\ref{stupidterm1}$) by $2^{-j}$, and taking the supremum over $j\geq0$ gives
\begin{equation*}
\sup_{j\geq0}2^{-j} ||[{\dot{\Delta}}_j, T_{v_n}\partial_m]{\bar{\omega}}_n(s)||_{L^{\infty}} \leq Cne^{C_1s}||{\bar{\omega}}_n(s)||_{{\dot{B}}^{-1}_{\infty,\infty}}.
\end{equation*}
For the case $j<0$, we apply a different strategy.  We first reintroduce the sum over $m$ and expand the commutator. We then use the properties of our partition of unity, the assumption that $j<0$, and the divergence-free assumption on $v$ to establish the following series of inequalities:
\begin{equation}\label{J23}
\begin{split}
&\sum_{m=1}^2 ||[{\dot{\Delta}}_j, T_{v_n}\partial_m] {\bar{\omega}}_n||_{L^{\infty}} \leq \sum_{m=1}^2 \sum_{j'=1}^2 ||\partial_m\dot{\Delta}_j(S_{j'-1}v_n\Delta_{j'}{\bar{\omega}}_n)||_{L^{\infty}}\\
&\qquad\qquad + \sum_{m=1}^2 \sum_{j'=1}^2||S_{j'-1}v_n\Delta_{j'}{\dot{\Delta}}_j\partial_m{\bar{\omega}}_n||_{L^{\infty}} \\
&\qquad\qquad\leq C\sum_{j'=1}^2 2^j ||v||_{L^{\infty}}||\Delta_{j'}{\bar{\omega}}_n||_{L^{\infty}},\\
\end{split}
\end{equation}
where we applied Bernstein's Lemma to get the factor of $2^j$ in the last inequality.  We bound $||\Delta_{j'}{\bar{\omega}}_n||_{L^{\infty}}$ with $2^{j'}||\Delta_{j'}{\bar{v}}_n||_{L^{\infty}}$, again by Bernstein's Lemma, we multiply ($\ref{J23}$) by $2^{-j}$, and we take the supremum over $j<0$.  This gives
\begin{equation*}
\sup_{j<0} 2^{-j}\sum_{m=1}^2 ||[{\dot{\Delta}}_j, T_{v_n}\partial_m]{\bar{\omega}}_n ||_{L^{\infty}} \leq C||v||_{L^{\infty}}||{\bar{v}}_n||_{L^{\infty}}.
\end{equation*}
We now bound $||v||_{L^{\infty}}$ using ($\ref{Eunifbd}$), and we bound $||{\bar{v}}_n||_{L^{\infty}}$ as in ($\ref{log}$).  We obtain the desired estimate:
\begin{equation*}
\sup_{j<0}\sum_{m=1}^2 ||[{\dot{\Delta}}_j, T_{v_n}\partial_m] {\bar{\omega}}_n(s)||_{L^{\infty}} \leq Ce^{C_1s}2^{-n\alpha}+Cne^{C_1s}||{\bar{\omega}}_n(s)||_{{\dot{B}}^{-1}_{\infty,\infty}}.
\end{equation*}
We now estimate the $L^{\infty}$-norm of $[{\dot{\Delta}}_j, T_{\partial_m\cdot}v_n]{\bar{\omega}}_n$.  We write out the commutator and estimate the $L^{\infty}$-norm of ${\dot{\Delta}}_j(T_{\partial_m {\bar{\omega}}_n}v_n)$ and $T_{\partial_m{\dot{\Delta}}_j {\bar{\omega}}_n}v_n$ separately.  By the definition of the paraproduct and by properties of our partition of unity, we have
\begin{equation}\label{comm2}
\begin{split}
&||T_{\partial_m{\dot{\Delta}}_j {\bar{\omega}}_n}v_n||_{L^{\infty}} = ||\sum_{l\geq 1}S_{l-1}\partial_m{\dot{\Delta}}_j{\bar{\omega}}_n\Delta_l v_n||_{L^{\infty}}
\end{split}
\end{equation}
\begin{equation*}
\begin{split}
&\leq\sum_{l=\max\{1,j\}}^{\infty}||S_{l-1}\partial_m{\dot{\Delta}}_j{\bar{\omega}}_n\Delta_l v_n||_{L^{\infty}}
\leq C ||{\dot{\Delta}}_j{\bar{\omega}}_n||_{L^{\infty}}||\nabla v_n||_{L^{\infty}},
\end{split}
\end{equation*}
where we applied Bernstein's Lemma and took the sum to get the second inequality.  We bound $||\nabla v_n||_{L^{\infty}}$ as in ($\ref{gradbound}$), we multiply ($\ref{comm2}$) by $2^{-j}$, and we take the supremum over $j\in\Z$.  This yields 
\begin{equation*}
\sup_{j\in\Z}2^{-j}||T_{\partial_m{\dot{\Delta}}_j {\bar{\omega}}_n}v_n(s)||_{L^{\infty}} \leq Cne^{C_1s}||{\bar{\omega}}_n(s)||_{{\dot{B}}^{-1}_{\infty,\infty}}.
\end{equation*}
Moreover, since the Fourier transform of $S_{l-1}\partial_m{\bar{\omega}}_n\Delta_l v_n$ has support in an annulus with inner and outer radius of order $2^l$,  we have for $j\geq 0$
\begin{equation}\label{comm3}
\begin{split}
&||{\dot{\Delta}}_j(T_{\partial_m {\bar{\omega}}_n}v_n)||_{L^{\infty}} = ||{\dot{\Delta}}_j (\sum_{l\geq 1}S_{l-1}\partial_m{\bar{\omega}}_n\Delta_l v_n)||_{L^{\infty}}\\
& \leq \sum_{l=\max\{1,j-4\}}^{j+4}\sum_{k\leq l} 2^{2k}2^{-l} ||\Delta_k {\bar{v}}_n||_{L^{\infty}}||\Delta_l \nabla {v}_n||_{L^{\infty}}\leq C2^j ||{\bar{v}}_n||_{L^{\infty}}||\omega^0||_{L^{\infty}},
\end{split}
\end{equation}
where we used Bernstein's Lemma to get the first inequality, and we used Lemma \ref{CZhighfreq} and ($\ref{NSvortbound}$) to get the second inequality.  For the case $j<0$, $||{\dot{\Delta}}_j(T_{\partial_m {\bar{\omega}}_n}v_n)||_{L^{\infty}}$ is identically $0$.  Therefore ($\ref{comm3}$) still holds.  We bound $||{\bar{v}}_n||_{L^{\infty}}$ as in ($\ref{log}$), we multiply ($\ref{comm3}$) by $2^{-j}$, and we take the supremum over $j\in\Z$, which yields
\begin{equation*}
\sup_{j\in\Z}2^{-j}||{\dot{\Delta}}_j(T_{\partial_m {\bar{\omega}}_n}v_n)(s)||_{L^{\infty}} \leq Ce^{C_1s}2^{-n\alpha}+Cn||{\bar{\omega}}_n(s)||_{{\dot B}^{-1}_{\infty,\infty}}
\end{equation*} 
for any fixed $\alpha\in(0,1)$.  

To estimate the remainder, we again expand the commutator and consider each piece separately.  We break $v_n$ into a low-frequency term and high-frequency term, and we consider $||{\dot{\Delta}}_j(\partial_mR((Id-S_0)v_n,{\bar{\omega}}_n))||_{L^{\infty}}$.  We have 
\begin{equation*}
\begin{split}
&\sup_{j\in\Z}2^{-j}||{\dot{\Delta}}_j(\partial_mR((Id-S_0)v_n,{\bar{\omega}}_n))||_{L^{\infty}}\\
&\qquad\leq C\sum_{l}\sum_{i=-1}^1 ||\Delta_{l-i} (Id-S_0)v_n||_{L^{\infty}}||\Delta_l {\bar{\omega}}_n||_{L^{\infty}}\\ 
&\qquad \leq C||{\bar{\omega}}_n||_{{\dot{B}}^{-1}_{\infty,\infty}}\sum_{l}\sum_{i=-1}^1||\Delta_{l-i} \nabla v_n||_{L^{\infty}},\\
\end{split}
\end{equation*}
where we used Bernstein's Lemma to get the first inequality and the second inequality.  We now apply the arguments in ($\ref{gradbound1}$) and ($\ref{gradbound}$) to conclude that 
\begin{equation*}
\sup_{j\in\Z}2^{-j}||{\dot{\Delta}}_j(\partial_mR((Id-S_0)v_n,{\bar{\omega}}_n))(s)||_{L^{\infty}} \leq Ce^{C_1s}n||{\bar{\omega}}_n(s)||_{{\dot{B}}^{-1}_{\infty,\infty}}.
\end{equation*}
To bound the low frequencies, we again apply Bernstein's Lemma and the definition of the remainder term.  We write
\begin{equation*}
\begin{split}
&\sup_{j\in\Z}2^{-j}||{\dot{\Delta}}_j(\partial_mR(S_0v_n,{\bar{\omega}}_n))||_{L^{\infty}} \leq C\sum_{l\leq 1}\sum_{i=-1}^1 ||\Delta_{l-i} S_0 v_n||_{L^{\infty}}||\Delta_l {\bar{\omega}}_n||_{L^{\infty}}\\
&\qquad\qquad\qquad \leq C||v||_{L^{\infty}}||{\bar{v}}_n||_{L^{\infty}}.\\
\end{split}
\end{equation*}  
The last inequality follows from the bound $||\Delta_l{\bar{\omega}}_n||_{L^{\infty}}\leq||\Delta_l\nabla\bar{ v}||_{L^{\infty}}$, Bernstein's Lemma, and the observation that $l\leq 1$.  We now bound $||v||_{L^{\infty}}$ using ($\ref{Eunifbd}$) and we bound $||{\bar{v}}_n||_{L^{\infty}}$ as in ($\ref{log}$).  This yields
\begin{equation*}
\begin{split}
&\sup_{j\in\Z}2^{-j}||{\dot{\Delta}}_j(\partial_mR(S_0v_n,{\bar{\omega}}_n))(s)||_{L^{\infty}}\\
&\qquad\qquad\leq Ce^{C_1s}2^{-n\alpha}+Ce^{C_1s}n||{\bar{\omega}}_n(s)||_{{\dot B}^{-1}_{\infty,\infty}}.
\end{split}
\end{equation*}
It remains to bound $\sup_{j\in\Z}2^{-j}||\partial_mR(v_n,{\dot{\Delta}}_j{\bar{\omega}}_n))||_{L^{\infty}}$.  Again we break $v_n$ into a low-frequency and high-frequency term.  We first estimate the high-frequency term.  We reintroduce the sum over $m$ and utilize the divergence-free property of $v$ to put the partial derivative $\partial_m$ on ${\bar{\omega}}_n$.  We then apply Bernstein's Lemma to conclude that for any fixed $j\in\Z$
\begin{equation}\label{comm7}
\begin{split}
&\sum_m||R((Id-S_0)v^m_n,{\dot{\Delta}}_j\partial_m{\bar{\omega}}_n))||_{L^{\infty}}\\ 
&\leq C\sum_{|k-l|\leq 1} 2^{l-k}||\Delta_k\nabla v_n||_{L^{\infty}} ||{\dot{\Delta}}_j\Delta_l{\bar{\omega}}_n||_{L^{\infty}}\leq C||\nabla v_n||_{L^{\infty}}||{\dot{\Delta}}_j{\bar{\omega}}_n||_{L^{\infty}}.
\end{split}
\end{equation}
The second inequality above follows because for fixed $j\geq 0$, we are summing only over $l$ satisfying $|l-j|\leq 1$, while for fixed $j<0$, we are only considering $l$ satisfying $-1\leq l \leq 1$.  We now bound $||\nabla v_n||_{L^{\infty}}$ as in ($\ref{gradbound}$), we multiply ($\ref{comm7}$) by $2^{-j}$, and we take the supremum over $j\in\Z$, which yields
\begin{equation*}
\sup_{j\in\Z}2^{-j}\sum_m||\partial_mR(v_n,{\dot{\Delta}}_j{\bar{\omega}}_n))(s)||_{L^{\infty}} \leq Ce^{C_1s}n||{\bar{\omega}}_n(s)||_{{\dot B}^{-1}_{\infty,\infty}}.
\end{equation*}
For the low-frequency term, we again use the divergence-free condition on $v_n$ to write
\begin{equation*}
\begin{split}
&\sup_{j\in\Z}2^{-j}\sum_m||\partial_mR(S_0v^m_n,{\dot{\Delta}}_j{\bar{\omega}}_n)||_{L^{\infty}}\\ &\qquad\leq \sup_{j\in\Z}2^{-j}\sum_{|k-l|\leq 1}||\Delta_kS_0 v_n||_{L^{\infty}} 2^{2j}||{\dot{\Delta}}_j\Delta_l{\bar{v}}_n||_{L^{\infty}}\leq C||v||_{L^{\infty}}||{\bar{v}}_n||_{L^{\infty}}.
\end{split}
\end{equation*}
To get the first inequality, we bounded $||{\dot{\Delta}}_j\partial_m{\bar{\omega}}_n||_{L^{\infty}}$ with $||{\dot{\Delta}}_j\partial_m\nabla{\bar{v}}_n||_{L^{\infty}}$ and applied Bernstein's Lemma.  The second inequality follows from the observation that we are only considering $k\leq 1$, and therefore, by properties of our partition of unity, we are only considering $j\leq 3$.   As with previous terms, we use ($\ref{Eunifbd}$) to bound $||v||_{L^{\infty}}$ and we use ($\ref{log}$) to bound $||{\bar{v}}_n||_{L^{\infty}}$.  We conclude that 
\begin{equation*}
\begin{split}
&\sup_{j\in\Z}2^{-j}\sum_m||\partial_mR(S_0v^m_n,{\dot{\Delta}}_j{\bar{\omega}}_n)(s)||_{L^{\infty}}\\
&\leq Ce^{C_1s}2^{-n\alpha}+Ce^{C_1s}n||{\bar{\omega}}_n(s)||_{{\dot B}^{-1}_{\infty,\infty}}. 
\end{split}
\end{equation*}
This completes the proof of Lemma \ref{comm}.
\Obsolete{
We now show that a Serfati solution to ($E$) satisfies the integral equation given in ($\ref{EINT}$).  We prove the following lemma.
\begin{lemma}\label{Easint}
Let $v$ be the unique solution to ($E$) which satisfies the conditions of Theorem \ref{Serfati}.  The the pressure $p$ satisfies the equality 
\begin{equation*}
p(t) = \sum_{i,j=1}^2\Delta^{-1}\partial_i\partial_j (v^iv^j)(t) +C
\end{equation*}
for every $t\geq 0$, where $C$ is an absolute constant.
\end{lemma}
\begin{proof}
Let $p'= \sum_{i,j=1}^2\Delta^{-1}\partial_i\partial_j (v^iv^j)$.  Taking the divergence of ($E$), we see that 
\begin{equation*}
\Delta p = \sum_{i,j=1}^2\partial_i\partial_j (v^iv^j).
\end{equation*}
Therefore, $\Delta p=\Delta p'$, which implies that $p$ and $p'$ differ by a harmonic polynomial $f$.  To complete the proof, it suffices to show that $f$ is a constant.  

Since $v$ is bounded, it follows by properties of Calderon-Zygmund operators that $p'$ belongs to the space $BMO$ (see, for example, \cite{Stein}).  Moreover, by the properties of the pressure $p$ corresponding to the Serfati solution $v$, we have that $\frac{p(x)}{|x|}$ approaches $0$ as $|x|$ approaches infinity.  Assume, for contradiction, that $f$ is a polynomial of degree greater than or equal to one.  
Given $\epsilon>0$, for $|x|$ sufficiently large, $p'(x)$ satisfies
\begin{equation*}
\begin{split}
|p'(x)| &\geq |f(x)|-|p(x)|\\
&\geq |f(x)|-\epsilon|x|.
\end{split} 
\end{equation*}
It follows that for large $x$, $p'$ behaves like a polynomial of degree greater than or equal to one, which contradicts the membership of $p'$ to $BMO$.  Therefore $f$ must be a constant.  This completes the proof.           
\end{proof}}

\end{document}